\newcommand{\R}{\mathbb{R}}
\newcommand{\p}{\mathbb{P}}
\newcommand{\pp}{\mathbf{p}}
\newcommand{\E}{\mathbb{E}}
\newcommand{\1}{\mathbf{1}}
\newcommand{\T}{\mathcal{T}}
\newcommand{\G}{\mathbb{G}}
\newcommand{\g}{\, | \,}
\newtheorem{theorem}{Theorem}[section]
\newtheorem{proposition}[theorem]{Proposition}
\begin{document}

\title{Stochastic Search with an Observable State Variable
}
\author{Lauren A. Hannah\thanks{Department of Operations Research and Financial Engineering, Princeton University, Princeton, NJ, 08544, lhannah@princeton.edu} \and Warren B. Powell\thanks{Department of Operations Research and Financial Engineering, Princeton University, Princeton, NJ, 08544} \and David M. Blei\thanks{Department of Computer Science, Princeton University, Princeton, NJ 08544}}
\maketitle

\begin{abstract}
In this paper we study convex stochastic search problems where a noisy objective function value is observed after a decision is made.  There are many stochastic search problems whose behavior depends on an exogenous state variable which affects the shape of the objective function.  Currently, there is no general purpose algorithm to solve this class of problems.  We use nonparametric density estimation to take observations from the joint state-outcome distribution and use them to infer the optimal decision for a given query state.  We propose two solution methods that depend on the problem characteristics: function-based and gradient-based optimization.  We examine two weighting schemes, kernel-based weights and Dirichlet process-based weights, for use with the solution methods.  The weights and solution methods are tested on a synthetic multi-product newsvendor problem and the hour-ahead wind commitment problem.  Our results show that in some cases Dirichlet process weights offer substantial benefits over kernel based weights and more generally that nonparametric estimation methods provide good solutions to otherwise intractable problems.
\end{abstract}


\section{Introduction}\label{sec:intro}
Stochastic search is a class of stochastic optimization problems where we have to find a deterministic parameter to minimize the expectation of a function of uncertain quantities.  The expectation is usually hard to compute, requiring instead the use of Monte Carlo samples.  The problem is typically written
\begin{equation}\label{eq:so}
\min_{x\in \mathcal{X}} \, \E\left[ F(x,Z)\right],
\end{equation}where $x\in \R^d$ is the decision, $\mathcal{X}$ is a decision set, $Z : \Omega \rightarrow \Psi$ is a random outcome and $F:\R^d \times \Psi \rightarrow \R$ is an objective function.  A classic example is the newsvendor problem where we have to stock a quantity of product to serve an uncertain demand with an unknown distribution. Each iteration, we can only observe how much we sold, after which we make adjustments. A rich theory has evolved to address problems of this type (see \citep{Sp03}).

In this paper, we introduce an important variation of the stochastic search problem.  Assume that we are first allowed to observe a state variable $S$ (such as whether it is raining or sunny) which changes our belief about the distribution of the random vector $Z$.  After observing $S$, we then choose $x$, and only then do we observe $Z$, or we may only observe $F(x,S,Z)$ (or its derivative). Each iteration starts with a new state, after which we choose an action and then observe the results.  Since information from the current state, decision and observation is used to update beliefs for future decisions, we have two challenges: 1) assembling information from previous state-decision-outcome pairs into something that can be used to make a decision for the current state and 2) finding the optimal decision given a state.

If the state space is small (say, rainy or sunny)  we can use classical methods from stochastic search by simply conditioning on the state when we do our updates. But it is often the case that $S$ is a vector, frequently with continuous elements.  The hour ahead wind commitment problem is an example:  a wind farm manager must pledge how much energy she will provide to a utility company an hour in the future.  If too much energy is pledged, the difference must be bought; if too little is pledged, the difference is lost.  The objective function depends on the future wind and market price, both unknown.  The last 24 hours of wind and market prices, time of day and time of year all contain information about the objective function.  This problem cannot be solved using standard techniques from stochastic search and stochastic optimization. 

To combat the first problem, sharing information across observations, we propose using nonparametric density estimation for the joint state and outcome distribution to group observations from ``similar'' states with weights.  To combat the second problem, making a decision given an observed state, we use the weighted observations to construct convex, deterministic approximations of the conditional expectation of the objective function.  Care is taken to ensure that the resulting optimization problems are computationally feasible.

With this high level summary in mind, we turn to a more formal description of the problem setting.  When we include the state variable, the stochastic search problem of Equation (\ref{eq:so}) becomes
\begin{equation}\label{eq:soState}
\min_{x\in\mathcal{X}} \E\left[F(x,s,Z)|S=s\right].
\end{equation}Note that the function $F$ itself may change with the state.  Conventional stochastic search techniques require us to sample from the conditional distribution $p(Z|S=s)$, treating each state observation independently~\citep{Sp03}.  We use nonparametric density estimation for the joint distribution of $(S,Z)$ to weight the states because similar values of $S$ usually affect $Z$ and $F$ in a similar way.

We propose a new model-free method to solve the stochastic
optimization problem with an observable state variable.  Our problem is motivated by online applications where we are given a state, and then after making a decision, we are given the realization of $Z$ which depends on the state.  For this reason, we index estimates and random
variables, such as $S_n$, with a subscript that indicates at which
iteration the value can be used.  We use a nonparametric density
estimate of $(S,Z)$ to weight previous observations $(S_i,
Z_i)_{i=0}^{n-1}$.  Given an observed state, we generate an estimate of $\E[F(x,s,Z)]$, called the
approximate function $\bar{F}_n(x|s)$, based on the weighted previous observations.  For this paper, we are
concerned with two classes of stochastic search solution methods
for convex problems:
\begin{itemize}
\item{\bf Function-Based Optimization:} given a state $S_n=s$ and an outcome $Z(\omega_{n+1})$ with $\omega_{n+1}\in \Omega$, the entire response function $F(x,s,Z(\omega_{n+1}))$ is known.
\item{\bf Gradient-Based Optimization:} given a state $S_n=s$, a decision $x_N$ and an outcome $\omega_{n+1}$, we only observe the stochastic gradient $\hat{\beta}(x_n,s,Z(\omega_{n+1})) =  \nabla_x F(x_n,s,Z(\omega_{n+1})).$
\end{itemize}
The wind commitment problem can be solved by function-based
optimization; once the wind speed at time $n+1$ is known, the value
for all possible commitment levels at time $n$ is also known.  In many
problems the entire objective function is too expensive too compute or
cannot be explicitly computed; however, it is often possible to
observe or estimate derivatives around a decision value.  For example,
many resource allocation problems involve solving a linear program;
the dual variables provide the gradients.  For more complicated
problems, function-based optimization may produce
functions too complicated for use in a solver.  Gradient-based
optimization eliminates this problem by restricting the form of the
approximate function $\bar{F}_n(x|s)$ to piecewise linear, separable
and convex.  In both methods, however, we use weights derived from a
joint distribution of $(S,Z)$ with previous observations to form an
approximate function, $\bar{F}_n(x|s)$.

A large class of function-based methods currently exist for problems without a state variable~\citep{Ro96,ShWa96,ShHoKi02}.  We craft a function-based optimization algorithm by extending the existing methods to weighted observations.  We give conditions for almost sure convergence of this algorithm to a global optimum.

Our approach to gradient-based optimization is less straightforward.  Stochastic approximation~\citep{RoMo51,KiWo52} is the most popular gradient search method for problems without a state variable, but for reasons explained in Section \ref{sec:sa}, it cannot be extended to problems with a state variable.  Instead, we try to construct a function that has the same behavior as the original objective function around the optimum.  Like \citep{PoRuTo04}, we use a separable, convex, piecewise linear approximation to do this, except that our previous observations are weighted according to the current state so that they produce appropriate slopes for the piecewise linear approximation.  
We show that the resulting algorithm converges to an arbitrarily small neighborhood around the optimum with probability one when the true objective function is itself separable.

Both methods rely heavily on weighting functions.  We give two methods to generate weights:  kernels and Dirichlet process mixture models.  Kernels are easy to implement and often give good results.  They can develop problems, however, when the state variable is moderate to high dimensional by giving all but a few observations weights that are effectively zero.  This can lead to unstable results.  As an alternative in these situations, we propose using weights generated by Dirichlet process mixture models.  Dirichlet process mixture models are Bayesian nonparametric models that produce a distribution over data partitions.  In effect, they cluster data in a Bayesian manner.  We derive weights from this model by placing equal weight on all previous observations that are in the same cluster as the current observation.  Then we approximate the average of these weights by taking a Monte Carlo sampling of clusterings.  This method requires more work, but it is far more stable than kernel methods.  We give conditions for when kernel and Dirichlet process weights satisfy the convergence criteria for both optimization algorithms. 

We test our methods on two problems, a two-product newsvendor problem and the hour ahead wind commitment problem.  In the two-product newsvendor problem, we use synthetic data and compare both optimization methods under each weighting function.  
In the hour ahead wind commitment problem, we use synthetic price data and wind data from the North American Land Data Assimilation System.  Due to the computational difficulties of computing weights and testing solutions every iteration, we only compare function-based optimization under the different weighting schemes.  Dirichlet process weights produce better results for this problem.

We contribute novel algorithms to include state variables in function-based optimization and gradient-based optimization problems.  We study two methods to do this:  kernel weights and Dirichlet process weights.  This is a new use of Dirichlet process mixture models.  We give empirical analysis for these methods where we show promising results on test problems.  

The paper is organized as follows.  In Section \ref{sec:literature}, we review the treatment of search and optimization problems in the presence of a state variable in different communities.  In Section \ref{sec:samplePath}, we review established function-based optimization methods, propose an algorithm that incorporates a state variable and prove convergence of that algorithm.  In Section \ref{sec:sa}, we review current gradient-based optimization methods, propose an algorithm that incorporates a state variable and prove convergence of the algorithm under certain conditions.  In Section \ref{sec:DP}, we present two weighting schemes, kernel and Dirichlet based weights.  We present an empirical analysis of our methods for synthetic newsvendor data and the hour ahead wind commitment problem in Section \ref{sec:numbers} and a discussion in Section \ref{sec:discussion}.

\section{Literature review}\label{sec:literature}
Several communities, including operations research, optimization and machine learning, have studied problems with forms similar to the stochastic search problem with a state variable of Equation (\ref{eq:soState}).  The problems and solution methods are diverse; even within communities, optimization problems with a state variable are never treated as an entire problem class.  We briefly outline the resulting hodgepodge of problems and methods.

Most commonly, stochastic search problems with a state variable are considered individually rather than as an entire problem class.  The prevailing approach is to construct a model for $Z$ and use the information in $S$ to supply the parameters.  In the case of wind farms, \citep{SoHaRo02} constructs a model for the wind and use a state of the world to determine parameters.  They also select which state variables are needed to parameterize the wind distribution.  Collectively, creating a model and selecting state variables to generate parameters for that model can require substantial time and domain knowledge.

In some areas, problems with state variables have become their own classes.  In the statistics and operations research community, there has been some study of portfolio and bandit problems in the presence of a state variable (called a ``covariate'' in the bandit literature and ``side information'' in the learning theory community).  \citep{CoOr96} and \citep{HeScSi98} studied portfolio optimization with a finite state variable, but handled it in a manner that amounts to treating each value as a separate problem.

Bandit problems with a state variable are another established area of study.  A bandit problem is a sequential decision problem with small, finite set of statistical populations (arms).  At each iteration, only one arm can be sampled; a random reward $R_i$ is obtained with probability $p_i$, where $i$ denotes the arm number.  When a state variable $S=s$ is included, the probability of a reward is $p_i(s)$.  The goal is to maximize the average reward.  This bandit variant was first introduced by \citep{Wo79} and has been studied when the distributions are assumed to have a parametric form~\citep{Sa91,WaKuPo05,GoZe09}.  They have also been studied where the mean function has been estimated in a nonparametric fashion~\citep{YaZh02,RiZe10}; however, decision-making mechanisms vary widely due to the non-convex decision set.

In the optimization community, parametric nonlinear programming includes what can be viewed as a state variable in a math programming setup.  The basic problem has the form
\begin{equation}\notag
\min_{x\in \mathcal{X}} F(x,s),
\end{equation}where $s$ ``parameterizes'' the program.  Such problems have been used for sensitivity analysis~\citep{JoWe90,RaDe95} and have been the focus of renewed interest in the model predictive control community~\citep{BeFi06}.  Parametric nonlinear programming, however, is deterministic and it assumes that $F$ is known for a given $s$.

The machine learning community solves problems with state variables more than any other community.  Machine learning is a catch-all term for a large set of subfields, including learning theory, reinforcement learning, classification, Bayesian nonparametrics and many others.  Problems with a state variable arise in some of these subfields, such as reinforcement learning and learning theory.  

State variables arise in a general way in dynamic programming and reinforcement
learning. In these problems, we might be in state $S$ and take an
action $a$, which determines, or influences, the next state
$S^{\prime}$ that we visit. The choice of the action $a$, then, needs to
consider the expected value of being in state $S^{\prime}$. A host of
algorithms have been proposed to solve this problem class (see, for
example, \citep{Pu94,BeTs96,SuBa98,TsVa01,Po07,Be07}), where the major
complication is that we do not know, and have to approximate, the
value of being in state $S^{\prime}$.  Our problem class is closest to
the field of stochastic search~\citep{Sp03}, where we often have to
face the challenge of vector-valued (and possibly continuous) decision
vectors $x$.  Of all the algorithms in reinforcement learning, our
problem is most similar to $Q$-learning which requires estimating the
value of $Q(S,a)$ (in our notation, $Q(S,x)$).  Although we do not
have to deal with the value of the downstream state, we do have to
address the challenge of complex state variables and vector-valued
decisions, which the $Q$-learning literature has not addressed.

State variables are also common in learning theory, where they are called ``side information.''  The portfolio references and many of the bandit references are written from a learning theory perspective.  The work closest to ours is \citep{HaMe07}; they incorporated a state variable into an online convex optimization problem.  In this setting, each iteration a player selects a decision from a convex set and an adversary selects a loss function from a finite set of options.  The state variable contains some information about the set of loss functions.  They construct an algorithm that minimizes regret, a notion of loss under a worst-case scenario, rather than expected loss.  They propose using a combination of a nearest-neighbor and $\epsilon$-net mapping from the state space to the decision space under smoothness constraints.  Values are updated by gradient observations.  Their algorithm does not converge to a fixed decision for a given state with more than one possible loss function.

We now turn to the first of our methods, function-based optimization.

\section{Function-based optimization with an observable state variable}\label{sec:samplePath}
We use function-based optimization when a single outcome $\omega$ can tell us the value of all decisions given that outcome~\citep{HeSc91,Ro96,ShWa96,ShHoKi02}.  For example, in the hour ahead wind commitment problem, if the wind is known, then the value of all commitment levels is known.  Function-based optimization relies on sampling a set of scenarios, $\omega_1,\dots,\omega_n$ from $\Omega$, to approximate the expectation:
\begin{equation}\label{eq:spo}
\min_{x \in \mathcal{X}} \frac{1}{n} \sum_{i=1}^n F(x,Z(\omega_i)).
\end{equation}
Equation (\ref{eq:spo}) is deterministic and deterministic methods can be used.  They can accommodate complex constraint sets but require that the entire function is known for an outcome $\omega$.  The following asymptotic results hold for function-based optimization.  Let $x^*$ be the true solution and $x^*_n$ be the solution to Equation (\ref{eq:spo}).  Under sufficient conditions, $x^*_n \rightarrow x^*$ almost surely as $n \rightarrow\infty$~\citep{Ro96,PlFuRo96,GuYoRo99}.  Moreover, \citep{Sh91} and \citep{RuMe98} show asymptotic normality under stricter conditions than those required for convergence.

Function-based optimization has been well studied, but under a variety of names.  \citep{Ro96}, \citep{ShWa96}, and \citep{GuYoRo99} call it sample path optimization; \citep{RuSh93} use likelihood ratios to approximate the optimization problem, calling it the stochastic counterpart method; \citep{HeSc91} studied this method and call it retrospective optimization; within the stochastic programming literature, it is often known as sample average approximation~\citep{ShHoKi02,KlShHo02} and scenario optimization~\citep{BiLo97}.  A similar method has been used in discrete event systems and gradient estimation, called perturbation analysis~\citep{Ho85,Gl90}.

\subsection{Algorithm for function-based optimization}
Extensions of existing algorithms to include a state variable are fairly straightforward.  New observations of $F$ have the form $F(x,S_i,Z(\omega_{i+1}))$ where $S_i$ is a random state variable.  It is difficult to place a prior over the space of all convex functions or create a joint distribution over states and the space of convex functions.  Therefore, we develop a locally weighted average of the observations to approximate the true mean function at a given state.

Let $s\in\mathcal{S}$ be a fixed query state.  We would like to minimize $\E[F(x,s,Z)]$ with respect to $x$.  Let $(S_i,Z(\omega_{i+1}))_{i=0}^{n-1}$ be a set of $n$ observations and $(w_n(s,S_i))_{i=0}^{n-1}$ be a set of weights based on the query state and the observed states where $$\sum_{i=0}^{n-1} w_n(s,S_i) = 1.$$  The weight functions may change with the number of observations $n$.  Set
\begin{equation}\label{eq:spoState1}
\bar{F}_n(x|s) = \sum_{i=0}^{n-1} w_n(s,S_i) F(x,S_i,Z(\omega_{i+1})).
\end{equation}The optimization problem becomes
\begin{equation}\label{eq:spoState2}
\min_{x\in \mathcal{X}} \, \bar{F}_n(x|s).
\end{equation}This produces an average of observations weighted by how close the observed states are to the current state.  $F(x,S_i,Z(\omega_{i+1}))$ is convex in $x$ for every $S_i$ and $\omega_{i+1}$, so $\bar{F}_n(x|s)$ is convex.  This is particularly helpful because Equation (\ref{eq:spoState2}) can then be solved with any number of deterministic solvers.  We give the general procedure in Algorithm \ref{alg:function}. 

\begin{algorithm}[h]\caption{Function-based optimization with an observable state variable}
  \begin{algorithmic}[1]\label{alg:function}
  \REQUIRE Query state $s$.
  \FOR{$i=0$ to $n-1$}
  \STATE Observe random state $S_i$.
  \STATE Observe random function $F(x,S_i,Z(\omega_{i+1}))$.
  \ENDFOR
  \STATE Generate weights $(w_n(s,S_i))_{i=0}^{n-1}$.
  \STATE Set $$\bar{F}_n(x|s) = \sum_{i=0}^{n-1} w_n(s,S_i) F(x,S_i,Z(\omega_{i+1})).$$
  \STATE Solve $$x^*_n(s) = \min_{x \in \mathcal{X}} \bar{F}_n(x|s).$$
  \end{algorithmic}
\end{algorithm}

Define $x^*_n(s)$ as $$x^*_n(s) = \arg \min_{x\in \mathcal{X}} \, \bar{F}_n(x|s),$$ and the true optimal decision $x^*(s)$ as $$ x^*(s) = \arg \min_{x \in \mathcal{X}} \, \E \left[ F(x,s,Z) \g S = s\right].$$  Let $\E\left[F(x,s,Z) \right] = F(x|s)$.  We would like the approximation to have the following property,
\begin{equation}\notag
\lim_{n\rightarrow\infty} x^*_n(s) = \lim_{n\rightarrow\infty} \arg \min_{x\in \mathcal{X}} \bar{F}_n(x|s) = \arg \min_{x\in\mathcal{X}} F(x|s) = x^*(s), \ \mathrm{almost \ surely.}
\end{equation}We discuss convergence properties in the following subsection.

\subsection{Convergence analysis}

Let $x_n^*(s)$ be the solution to Equation (\ref{eq:spoState2}) and let $x^*(s)$ be the true solution.  We would like $x_n^*(s)\rightarrow x^*(s)$ almost surely, pointwise in $s$, for $s$ in a compact subset of the state space, denoted $\mathcal{S}_D$.  In Theorem \ref{thm:samplePathConvergence}, we give conditions under which almost sure convergence occurs.

Before we state the main theorem, we give a set of assumptions.  

{\bf (A\ref{sec:samplePath}.1)}  $\mathcal{X}$ is a convex subset of $\R^d$ and $\mathcal{S}_D$ is a compact subset of $\mathcal{S}$.

{\bf (A\ref{sec:samplePath}.2)}  The function $F(x,s,Z(\omega))$ is almost surely convex and continuous in $x \in \mathcal{X}$ for every $s \in \mathcal{S}_D$.

{\bf (A\ref{sec:samplePath}.3)} For every fixed $s \in \mathcal{S}_D$ and $x \in \mathcal{X}$, let $(w_n(s,S_i)_{0:n})_{n=0}^{\infty}$ and the distribution of $(Z,S)$ be such that $$\lim_{n \rightarrow \infty} \sum_{i=0}^{n-1} w_n(s,S_i) F(x,S_i, Z(\omega_{i+1})) = F(x|s).$$

{\bf (A\ref{sec:samplePath}.4)} For every fixed $s \in \mathcal{S}_D$, the function $\E [F(x,s,Z(\omega)]$ has a unique minimizer.

Assumptions (A\ref{sec:samplePath}.1)--(A\ref{sec:samplePath}.2) places bounds on the decision set and state set. Assumption (A\ref{sec:samplePath}.3) places a pointwise convergence condition on the functional estimator; this places restrictions on both the weighting functions $w_n(s,S_i)$ and the distribution of $Z|S=s$.  See Section \ref{sec:DP} for a discussion on this assumption.  Assumption (A\ref{sec:samplePath}.4) assures that there is only one optimal decision per state.  The main convergence theorem for function-based optimization is as follows.

\begin{theorem}\label{thm:samplePathConvergence}
Let $\bar{F}_n(x|s) = \sum_{i=1}^n w_n(s,S_i) F( x,S_i,Z(\omega_{i+1}))$.  Suppose that assumptions (A\ref{sec:samplePath}.1)--(A\ref{sec:samplePath}.4) hold.  Then, $x_n^*(s) \rightarrow x^*(s)$ almost surely, pointwise for every $s \in \mathcal{S}_D$.
\end{theorem}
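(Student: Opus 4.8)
The plan is the standard convexity (epigraphical) argument for sample average approximation, carried out pointwise in $s$. Fix $s\in\mathcal{S}_D$ throughout. There are three stages: (1) from the fixed-$x$ pointwise limit in (A\ref{sec:samplePath}.3), extract a \emph{single} full-probability event on which $\bar{F}_n(\cdot|s)\to F(\cdot|s)$ along a countable dense set of decisions; (2) upgrade this to uniform convergence on compact subsets of $\mathcal{X}$ using convexity (A\ref{sec:samplePath}.2); and (3) conclude $x^*_n(s)\to x^*(s)$ from the uniform convergence together with uniqueness of the limiting minimizer (A\ref{sec:samplePath}.4). For stage (1): since $\mathcal{X}\subseteq\R^d$ is separable, pick a countable $D\subseteq\mathcal{X}$ that is dense in $\mathcal{X}$; by (A\ref{sec:samplePath}.3) each $x\in D$ yields a probability-one event on which $\bar{F}_n(x|s)\to F(x|s)$, and intersecting these countably many events gives a single event $\Omega_s$ with $\p(\Omega_s)=1$ on which convergence holds for all $x\in D$ simultaneously. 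Everything below is for a fixed outcome in $\Omega_s$.

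For stage (2): by (A\ref{sec:samplePath}.2) each realization $F(\cdot,S_i,Z(\omega_{i+1}))$ is finite, convex and continuous on $\mathcal{X}$, so each weighted average $\bar{F}_n(\cdot|s)$ is too, and $F(\cdot|s)=\E[F(\cdot,s,Z)]$ is finite and convex as an expectation of convex functions (hence continuous on the relative interior of $\mathcal{X}$). I would then invoke the classical fact that a sequence of finite convex functions on a relatively open convex set that converges pointwise on a dense subset in fact converges uniformly on every compact subset, the limit being a finite convex function (e.g.\ Rockafellar, \emph{Convex Analysis}, Theorem~10.8; equivalently, this is epi-convergence of $\bar{F}_n(\cdot|s)$ to $F(\cdot|s)$). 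Combined with stage (1), this gives, on $\Omega_s$, that $\sup_{x\in K}|\bar{F}_n(x|s)-F(x|s)|\to 0$ for every compact $K$ in the relative interior of $\mathcal{X}$, the uniform limit being identified as $F(\cdot|s)$ by density and continuity.

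For stage (3): let $x^*(s)$ be the unique minimizer of $F(\cdot|s)$ given by (A\ref{sec:samplePath}.4), fix $\varepsilon>0$, and take a compact convex $K\subseteq\mathcal{X}$ with $x^*(s)$ in its interior. By uniqueness and continuity, $\delta := \min\{\,F(x|s)-F(x^*(s)|s)\,:\,x\in K,\ \|x-x^*(s)\|=\varepsilon\,\}>0$. Once $n$ is large enough that $\sup_{x\in K}|\bar{F}_n(x|s)-F(x|s)|<\delta/2$, a short convexity argument --- comparing $\bar{F}_n(\cdot|s)$ at $x^*(s)$, at a minimizer $x^*_n(s)$ over $K$, and at the point where the segment $[x^*(s),x^*_n(s)]$ first exits the $\varepsilon$-ball around $x^*(s)$ --- forces $\|x^*_n(s)-x^*(s)\|<\varepsilon$. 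Since $\varepsilon>0$ is arbitrary, $x^*_n(s)\to x^*(s)$ on $\Omega_s$, i.e.\ almost surely; and since $s\in\mathcal{S}_D$ was arbitrary, this is the claimed pointwise-in-$s$ almost sure convergence.

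\textbf{Main obstacle.} The load-bearing step is stage (2): without convexity, pointwise convergence on a dense set says nothing about uniform convergence or about the behaviour of minimizers, so (A\ref{sec:samplePath}.2) is doing essential work rather than being cosmetic. The delicate technical point is that (A\ref{sec:samplePath}.1) only assumes $\mathcal{X}$ convex, not compact, so I must rule out the minimizers $x^*_n(s)$ drifting to the boundary of $\mathcal{X}$ or off to infinity; the clean remedies are either to strengthen (A\ref{sec:samplePath}.1) to compactness of $\mathcal{X}$, or to add a coercivity/growth condition on $F(\cdot|s)$ so that, together with the uniform convergence from stage (2), all $x^*_n(s)$ are eventually confined to a fixed compact set. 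The measure-theoretic bookkeeping in stage (1) and the inequality chase in stage (3) are routine.
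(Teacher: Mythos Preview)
Your proposal is correct and follows essentially the same route as the paper: fix $s$, pass from pointwise convergence of $\bar{F}_n(\cdot|s)$ to uniform convergence on compact sets, then deduce convergence of minimizers from uniqueness of $x^*(s)$. The paper packages the last two stages as citations to Robinson (1996)---Proposition~2.4 (uniform convergence on compacts $\Rightarrow$ epi-convergence) and Corollary~3.11 (epi-convergence plus unique minimizer $\Rightarrow$ argmin convergence)---whereas you carry them out by hand via Rockafellar's Theorem~10.8 and a direct $\varepsilon$--$\delta$ comparison. Your version is actually the more careful of the two: you correctly identify convexity (A\ref{sec:samplePath}.2) as the engine that upgrades pointwise to uniform-on-compacts convergence (the paper's one-line claim that ``bounded and continuous implies equicontinuous'' is false without convexity or some other uniform control), you explicitly handle the countable intersection of full-probability events needed to get a single a.s.\ event from the per-$x$ statement in (A\ref{sec:samplePath}.3), and you rightly flag the non-compactness of $\mathcal{X}$ as a point where either coercivity or a compactness hypothesis is needed to keep the $x^*_n(s)$ from escaping---an issue the paper's appeal to Robinson does not address.
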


The proof of Theorem \ref{thm:samplePathConvergence} relies heavily on Proposition 2.4 and Theorem 3.7 of \citep{Ro96}.  We state them now, modified for our setting.

\begin{proposition}[Proposition 2.4 of \citep{Ro96}]
If, for every $s\in \mathcal{S}_D$, $\bar{F}_n(x|s)$ converges uniformly to $F(x|s)$ on all compact, non-empty subsets of $\mathcal{X}$, then $\bar{F}_n(x|s)$ epiconverges to $F(x|s)$.
\end{proposition}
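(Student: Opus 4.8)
The plan is to fix $s \in \mathcal{S}_D$ throughout (the asserted epiconvergence is pointwise in $s$) and verify the two defining inequalities of epiconvergence directly from the definition. Recall that $\bar{F}_n(\cdot|s)$ epiconverges to $F(\cdot|s)$ exactly when, for every $x \in \mathcal{X}$, both of the following hold: the lower epi-limit condition, that for every sequence $x_n \to x$ one has $\liminf_n \bar{F}_n(x_n|s) \geq F(x|s)$; and the upper epi-limit condition, that there exists some sequence $x_n \to x$ with $\limsup_n \bar{F}_n(x_n|s) \leq F(x|s)$. Before verifying these, I would record that $F(\cdot|s)$ is continuous on $\mathcal{X}$: each $\bar{F}_n(\cdot|s)$ is a finite convex combination of the continuous convex functions $F(\cdot,S_i,Z(\omega_{i+1}))$ (assumption (A\ref{sec:samplePath}.2)), hence continuous, and a limit that is uniform on compact sets of continuous functions is continuous. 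Equivalently, $F(\cdot|s)$ is convex as a pointwise limit of convex functions and therefore continuous on the interior of $\mathcal{X}$.

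The upper epi-limit inequality is the easy half, and I would dispatch it with the constant recovery sequence $x_n \equiv x$. Since the singleton $\{x\}$ is a compact subset of $\mathcal{X}$, the uniform-convergence hypothesis gives $\bar{F}_n(x|s) \to F(x|s)$, so $\limsup_n \bar{F}_n(x_n|s) = F(x|s)$, which trivially satisfies the required $\leq F(x|s)$.

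The substance lies in the lower epi-limit inequality. Given an arbitrary sequence $x_n \to x$, the set $K = \{x_n : n \geq 0\} \cup \{x\}$ is compact, being a convergent sequence together with its limit; this is the key observation that licenses the use of the hypothesis. On $K$, uniform convergence yields $\epsilon_n := \sup_{y \in K} |\bar{F}_n(y|s) - F(y|s)| \to 0$, whence $\bar{F}_n(x_n|s) \geq F(x_n|s) - \epsilon_n$. By continuity of $F(\cdot|s)$ we have $F(x_n|s) \to F(x|s)$, and since $\epsilon_n \to 0$ the right-hand side converges to $F(x|s)$; taking $\liminf$ gives $\liminf_n \bar{F}_n(x_n|s) \geq F(x|s)$, as required. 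Combining the two inequalities establishes epiconvergence for this fixed $s$, and since $s \in \mathcal{S}_D$ was arbitrary the claim follows pointwise.

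The argument is short, and the only genuine obstacle is conceptual rather than computational: one must recognize that the correct compact set on which to apply the uniform-convergence hypothesis is the convergent sequence together with its limit point, and that continuity (in fact lower semicontinuity would suffice) of the limit $F(\cdot|s)$ is precisely what converts $\liminf_n F(x_n|s)$ into $F(x|s)$. Everything else is routine, and no probabilistic content enters here — the statement is a deterministic fact about sequences of functions, applied separately for each fixed $s$.
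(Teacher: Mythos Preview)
Your argument is correct and self-contained: you verify the two defining inequalities of epiconvergence directly, using the compact set $K=\{x_n\}\cup\{x\}$ to invoke the uniform-convergence hypothesis for the $\liminf$ half, and the constant recovery sequence for the $\limsup$ half. The continuity of $F(\cdot|s)$, which you need to pass from $F(x_n|s)$ to $F(x|s)$, is legitimately obtained as the uniform-on-compacta limit of continuous functions.

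The paper, however, does not prove this proposition at all: it is simply quoted from \cite{Ro96} as a known result and used as a black box inside the proof of Theorem~\ref{thm:samplePathConvergence}. So there is no ``paper's own proof'' to compare against here. What you have written is an independent, elementary derivation of a cited fact; it is more than the paper asks for, but it is sound, and your closing remark that lower semicontinuity of the limit would already suffice is exactly the right observation about the minimal hypothesis needed for the $\liminf$ inequality.
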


\begin{theorem}[Corollary 3.11 of \citep{Ro96}]
Suppose that $\bar{F}_n(x|s)$ epiconverges to $F(x|s)$ and that $F(x|s)$ has a unique minimizer for a fixed $s \in \mathcal{S}_D$.  Then $x^*_n(s)$ converges almost surely to $x^*(s)$.
\end{theorem}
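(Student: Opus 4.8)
The plan is to prove this as the classical "epi-convergence forces convergence of minimizers" result, specialized to the case of a unique limiting minimizer, and then to add the measure-theoretic bookkeeping that promotes the deterministic argument to an almost-sure one. Fix the query state $s$ and abbreviate $f_n := \bar{F}_n(\cdot \g s)$, $f := F(\cdot \g s)$, $x_n^* := x_n^*(s)$, and $x^* := x^*(s)$. I would work from the two-sided sequential characterization of epiconvergence: (i) for every $x \in \mathcal{X}$ and every sequence $x_n \to x$ one has $\liminf_n f_n(x_n) \ge f(x)$, and (ii) for every $x \in \mathcal{X}$ there is a recovery sequence $y_n \to x$ with $\limsup_n f_n(y_n) \le f(x)$. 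The proof splits naturally into an upper bound on the approximate optimal values, a characterization of cluster points, and a compactness argument.

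First I would pin down the optimal values. Applying the recovery half (ii) at the point $x^*$ produces a sequence $y_n \to x^*$ with $\limsup_n f_n(y_n) \le f(x^*)$; since $x_n^*$ minimizes $f_n$ over $\mathcal{X}$ we have $f_n(x_n^*) \le f_n(y_n)$, and therefore $\limsup_n f_n(x_n^*) \le f(x^*)$. I would then show that every cluster point of $(x_n^*)$ coincides with $x^*$. Let $\bar x$ be such a cluster point, witnessed by a subsequence $x_{n_k}^* \to \bar x$. The liminf half (i), applied along this subsequence, gives $f(\bar x) \le \liminf_k f_{n_k}(x_{n_k}^*) \le \limsup_n f_n(x_n^*) \le f(x^*)$. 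Since $x^*$ minimizes $f$, also $f(\bar x) \ge f(x^*)$, so $f(\bar x) = f(x^*)$, and uniqueness of the minimizer forces $\bar x = x^*$. Thus the only possible cluster point of the sequence of approximate minimizers is the true minimizer.

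The main obstacle is that "every cluster point equals $x^*$" upgrades to "$x_n^* \to x^*$" only once the sequence is known to remain in a compact set; otherwise the minimizers could drift to infinity and possess no cluster point at all, voiding the previous step. I would supply this exactly as in Robinson's development of \citep{Ro96}: either $\mathcal{X}$ is compact, or, more generally, one invokes the eventual level-boundedness that accompanies epiconvergence together with the unique (hence proper, coercive on its level sets) minimizer of $f$, so that for all large $n$ the minimizers $x_n^*$ lie in a common compact subset of $\mathcal{X}$. This is the step where the paper's standing conditions on $\mathcal{X}$ and on $F(\cdot \g s)$ must do real work, and it is the delicate point one cannot skip. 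Granting it, $(x_n^*)$ has at least one cluster point and all of its cluster points equal $x^*$, whence $x_n^* \to x^*$.

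Finally, the almost-sure qualifier is pure bookkeeping. The hypothesis that $f_n$ epiconverges to $f$ is itself an almost-sure statement, since $f_n = \bar{F}_n(\cdot \g s)$ is built from the random observations $(S_i, Z(\omega_{i+1}))$. On the probability-one event where epiconvergence holds, the entirely deterministic argument above applies verbatim and yields $x_n^*(s) \to x^*(s)$, which is precisely the claimed almost-sure convergence for the fixed state $s$.
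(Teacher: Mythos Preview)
The paper does not actually prove this statement: it is stated as Corollary 3.11 of \citep{Ro96} and invoked as a black box inside the proof of Theorem \ref{thm:samplePathConvergence}. So there is no ``paper's own proof'' to compare against, only a citation.

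That said, your proposal is a correct and standard proof of the cited result. You use the liminf/limsup characterization of epiconvergence to bound the approximate optimal values from above, identify every cluster point of $(x_n^*)$ with $x^*$ via the uniqueness hypothesis, and correctly flag that compactness (or eventual level-boundedness) is the nontrivial ingredient needed to guarantee the sequence has cluster points at all. Your remark that Robinson's framework supplies this through level-boundedness, and that in the present paper one can also fall back on the compactness assumptions on $\mathcal{X}$ (or rather on the compact subsets on which uniform convergence is asserted), is exactly right. The almost-sure bookkeeping at the end is also appropriate. In short: your argument is sound and is essentially the classical proof that Robinson gives; the paper simply cites that proof rather than reproducing it.
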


{\em Proof of Theorem \ref{thm:samplePathConvergence}.}  Fix $s$ in $\mathcal{S}_D$.  We show almost sure convergence by satisfying the conditions of Corollary 3.11 in \citep{Ro96}.  First, we show that $\bar{F}_n(x|s) \rightarrow F(x|s)$ uniformly for $x \in \mathcal{X}_D$, where $\mathcal{X}_D$ is a compact subset of $\mathcal{X}$.  Because $\bar{F}_n(x|s)$ is bounded and continuous, it is equicontinuous; because it is equicontinuous and converges pointwise in $x$ to $F(x|s)$, $\bar{F}_n(x|s)$ converges uniformly to $F(x|s)$.  Continuity of $F_n(x|s)$ and uniform convergence to $F(x|s)$ satisfy the conditions of Proposition 2.4 of \citep{Ro96}, which in turn satisfies the conditions of Corollary 3.11.  \qquad\endproof

We discuss the choice of weight functions in Section \ref{sec:DP}.  Before that, however, we present an algorithm and theoretical results for gradient-based optimization with a state variable.

\section{Gradient-based optimization with an observable state variable}\label{sec:sa}
In gradient-based optimization, we no longer observe an entire function $F(x,S_n,Z(\omega_{n+1}))$, but only a derivative taken at $x_i$, $$\hat{\beta}(x_n,S_n,\omega_{n+1}) =  \nabla_x F(x_n,S_n,Z(\omega_{n+1})).$$  Stochastic approximation is the most popular way to solve stochastic search problems using a gradient; it modifies gradient descent algorithms to account for random gradients~\citep{RoMo51,KiWo52}.  The general idea is to optimize $x$ by iterating,
\begin{equation}\label{eq:sa1}
x_{n+1} = \Gamma_{\mathcal{X}}\left(x_n - a_n \nabla_x \, F(x_n,Z(\omega_{n+1}))\right),
\end{equation}where $\Gamma_{\mathcal{X}}$ is a projection back into the constraint set $\mathcal{X}$, $\nabla_x \, F(x_n,Z(\omega_{n+1}))$ is the stochastic gradient at $x_n$ and $a_n$ is a stepsize.  Another approach to gradient-based optimization uses construction of piecewise linear, convex functions to approximate $F(x)$~\citep{GoPo01,PoRuTo04}; we will follow the second approach.

Including a state variable into gradient-based optimization is less straightforward than it is for function-based optimization.  We encounter difficulties because we choose $x_n$ given $S_n$; Equation (\ref{eq:sa1}) works because \textit{only} $x_n$ changes.  When we include an observed state $S_n$, the decision $x_n$ is based on the state $S_n$.  Therefore, it cannot be chosen in an iterative manner directly from $x_{n-1}$, which is based on the state $S_{n-1}$.  Additionally, constructing the approximate function $\bar{F}_n(x|s)$ in a convex manner is not trivial because the gradient observations are based on \textit{both} $x_n$ and $S_n$.  In this section, we give an algorithm for gradient-based optimization with a state variable, along with convergence analysis for that algorithm.

\subsection{Algorithm for gradient-based optimization}
We propose modeling $F(x|s)$ with a piecewise linear, convex, separable approximation.  Even if $F(x|s)$ is not itself separable, we aim to approximate it with a simpler (separable) function that has the same minimum for every fixed $s$.  Approximating the minimum well is easier than approximating the entire function~\citep{ChPo00,PoRuTo04}.  Moreover, convex interpolation is easier in one dimension than multiple dimensions. 
We approximate $\E[F(x,s,Z)]$ by a series of separable functions,
\begin{equation}\notag
\bar{F}_n(x|s) = \sum_{k=1}^d f_n^{k}(x^{k}|s),
\end{equation}where $x^{k}$ is the $k^{th}$ component of $x$ and $f^k_n(x|s)$ is a univariate, piecewise linear function in $x$.  We enforce convexity restrictions on marginal functions $f_n^{k}(x|s)$ for every $s \in \mathcal{S}$.  We assume the existence of stochastic gradients, $\hat{\beta}(x,s,\omega) = \nabla_x F(x,s,Z(\omega))$, which are obtained as a response instead of $F(x,s,Z(\omega))$.

The observations $(x_i,S_i,\hat{\beta}(x_i,S_i,\omega_{i+1}))_{i=0}^{n-1}$ are used to update $\bar{F}_n(x|s)$ sequentially.  We want to assemble a set of $d$ piecewise linear marginal functions $f^k_n(x|s)$ by constructing a series of slopes, $v_{0:n-1}^k(S_n)$, based on $\hat{\beta}_{1:n}$.  We use weights to group the gradients from states ``similar'' to $S_n$.  We outline the algorithm as follows.

\paragraph{Step 1: Observe $S_n$ and generate $(w_{n}(S_n,S_i))_{i=0}^{n-1}$}This is discussed in Section \ref{sec:DP}.

\paragraph{Step 2: Construct slopes for $f^k_n(x|S_n)$ given $\hat{\beta}_{1:n}$, $x_{0:n-1}$ and $(w_{n}(S_n,S_i))_{i=0}^{n-1}$}Fix $k$.  We begin by placing the observed decisions in ascending order: $$x_{[0]}^k\leq x_{[1]}^k\leq \dots \leq x_{[n-1]}^k,$$  where $[0],\dots,[n-1]$ is the ordered numbering.  A necessary and sufficient condition for $f^k_n(x|S_n)$ to be convex is for the slopes to be nondecreasing; that is, $$\frac{d}{dx} f^k_n(x|S_n) \leq \frac{d}{dy} f^k_n(y|S_n)$$ for every $x \leq y$.  We find a set of slopes $v^k_{n,[0]}(S_n) \leq \dots \leq v^k_{n,[n-1]}(S_n)$ corresponding to the ordered decisions $x_{[0]}^k,\dots,x_{[n-1]}^k$ using weighted least squares minimization, which is a quadratic program,
\begin{align}\label{eq:projection}
v^k_n(S_n)&= \arg \min_{v} \, \sum_{i=0}^{n-1} w_n\left(S_n,S_{[i]}\right)\left(\hat{\beta}(x_{[i]}^k,S_{[i]},\omega_{[i+1]}) - v_{[i]}\right)^2,\\\notag
\mathrm{subject \ to:} \ v_{[i-1]} & \leq v_{[i]}, \ \ \ i = 1,\dots, n-1.
\end{align}

\paragraph{Step 3: Reconstruct $f^k_n(x|S_n)$, $\bar{F}_n(x|s)$ given $v^k_n(S_n) $} Suppose that $\mathcal{X}$ is compact; there exists a minimum value $x^k_{min}$ and a maximum value $x^k_{max}$ for each dimension $k$.  Set $x_{[-1]}^k = x^k_{min}$ and $x_{[n]}^k = x^k_{max}$.  Define $f^k_n(x|S_n)$ as follows,
\begin{equation}\label{eq:reconstruct}
f^k_n(x|S_n) = \sum_{i = 0}^{\ell} v^k_{n,[i]}(S_n) \, (x_{[i]}^k - x_{[i-1]}^k ) + v^k_{n,[\ell]}(S_n) \, (x - x_{[\ell]}^k),
\end{equation}where $\ell$ is the smallest index such that $x_{[\ell]}^k \leq x < x_{[\ell+1]}^k$.    Set $$\bar{F}_n(x|S_n) = \sum_{k=1}^d f^k_n(x|S_n).$$
Note that the reconstruction is the same as the original up to a constant, which does not affect the optimal decision.
\paragraph{Step 4: Choose $x_n$ given $\bar{F}_n(x|S_n)$}
We want to choose an $x_n$ so that we learn as much as possible for an arbitrary $s$.  This is done by picking $x_n$ as follows,
\begin{equation}\label{eq:xnChoice}
x_n = \arg \min_{x \in \mathcal{X}} \, \bar{F}_n(x|S_n).
\end{equation}Note that $\bar{F}_n$ is a piecewise linear function; if $\mathcal{X}$ is a linear constraint set, the minimum can be found with a linear program.

\begin{algorithm}[h]\caption{Gradient-based optimization with an observable state variable}
  \begin{algorithmic}[1]\label{alg:online}
  \REQUIRE Query state $s$, initial slopes $v_0$.
  \FOR{$i=0$ to $n-1$}
  \STATE Observe random state $S_i$.
  \STATE Generate weights $\left(w_i(S_i,S_j)\right)_{j=0}^{i-1}$. (See Section \ref{sec:DP}.)
  \FOR{$k=1$ to $d$}
  	\STATE Place decision observations in ascending order:  $x_{[0]}^k\leq \dots \leq x_{[i-1]}^k.$  	\STATE Compute slopes $v_i^k(S_i)$ by {\normalsize \begin{align}\notag
& v^k_i(S_i)= \arg \min_{v}  \sum_{j=0}^{i-1} w_i\left(S_i,S_{[j]}\right)\left(\hat{\beta}(x_{[j]}^k,S_{[j]},\omega_{[j+1]}) - v_{[j]}\right)^2,\\\notag
& \mathrm{subject \ to:} \ v_{[j-1]} \leq v_{[j]}, \ \ \ j = 1,\dots, i-1.
\end{align}}
  	\STATE Reconstruct marginal function $f_i^{k}(x^k|S_i)$ using slopes $v_i^k(S_i)$ as per Equation (\ref{eq:reconstruct}). 
  \ENDFOR
  \STATE Set \begin{equation}\notag x_i = \arg\min_{x \in \mathcal{X}} \, \sum_{k=1}^d f_n^{k}(x^k|S_i).\end{equation}
  \STATE Observe random gradient $\hat{\beta}(x_i,S_i,\omega_{i+1}) = \nabla_x F(x_i, S_i, Z(\omega_{i+1})).$  
  \ENDFOR
  \STATE Compute $v_n^k(s)$, $k = 1,\dots, d$ as in Step 6.
  \STATE Compute $f_n^k(x^k|s)$, $k = 1,\dots,d$ using $v_n^k(s)$ as in Step 7.
  \STATE Set \begin{equation}\notag x_n^*(s) = \arg \min_{x \in \mathcal{X}} \, \sum_{k=1}^d f_n^{k}(x^k|s).\end{equation}
  \end{algorithmic}
\end{algorithm}

A full overview of this procedure is given in Algorithm \ref{alg:online}.  Notice that while the function-based optimization of Algorithm \ref{alg:function} can essentially be performed in a \textit{post hoc} batch setting, Algorithm \ref{alg:online} can only be performed in an online setting.  We discuss a grid-based extension of Algorithm \ref{alg:online} in the following subsection.

\subsection{Grid-based decisions}One of the more computationally heavy parts of Algorithm \ref{alg:online} is Step 6, the projection of the slopes to an ordered space via a quadratic program.  The number of parameters and constraints grows linearly with the number of observations.  In some numerical work, we have found it easier to make decisions on a grid format.

Fix $k$.  If $\mathcal{X}$ is compact, we can create an arbitrarily fine grid on the $k^{th}$ dimension with a finite number of points, $$a_{1}^k\leq\dots\leq a_{N(k)}^k.$$Suppose that they are evenly spaced with distance $\alpha$ and let the intersection of this set of points with $\mathcal{X}$ be denoted $\mathcal{X}^G$.  If all decisions are selected from $\mathcal{X}^G$, the parameter and constraint set for Equation (\ref{eq:projection}) never grows.

The inclusion of a grid changes the way that we select $x_n$.  Let $\hat{x}_n$ be the solution to the original approximated problem, $$\hat{x}_n = \min_{x\in\mathcal{X}}\bar{F}_n(x|S_n).$$We can generate $x_n$ from $\hat{x}_n$ in one of two ways: 1) projection to the nearest feasible point in $\mathcal{X}^G$, or 2) random selection of a neighboring point in $\mathcal{X}^G$.  The second option is computationally simple and can be guaranteed to break the constraints by at most an arbitrarily small amount through grid construction.  We use the second method in our numerical examples.

\subsection{Convergence analysis}
We now give conditions under which Algorithm \ref{alg:online} converges in probability to the global optimum pointwise for every state $s$ is a set of query states $\mathcal{S}_D$.  The observed state $S_n$ is likely not the same as our query state $s$, but we often care about what the approximation says is the best decision for $s$ after $n$ observations, defined as $x^*_n(s)$.  Set
\begin{equation}\notag
x^*_n(s) = \arg \min_{x \in \mathcal{X}} \, \bar{F}_n(x|s).
\end{equation}We would like $x_n^*(s)$ to approach the true optimal decision, $x^*(s)$, as $n \rightarrow \infty$, where
\begin{equation}\notag
x^*(s) = \arg \min_{x \in \mathcal{X}} \, \E \left[ F(x,s,Z)\g S = s\right] = \arg\min_{x\in\mathcal{X}} F(x|s).
\end{equation}  The outline of the proof is to show that the decisions sampled for a sufficiently small neighborhood of states around $s$ accumulates in an arbitrarily small neighborhood around $x^*(s)$.  This is done by verifying optimality conditions in the accumulation regions.  
First, however, we need to define a set of assumptions.


Suppose that there are a finite set of functions $g_i(x)$, $i = 1, \dots, p$ and $h_j(x)$, $j = 1,\dots, q$ such that the constraint set $\mathcal{X}$ can be written as $$\mathcal{X} = \left\{x \, : \, g_i(x) \leq 0, \ h_j(x) = 0, \ i = 1,\dots, d, \ j = 1,\dots, q\right\}.$$

The following conditions require separability and strong convexity of the objective function, along with differentiability of the objective function and constraints.

{\bf (A\ref{sec:sa}.1)} For every $s \in \mathcal{S}$, the function $F(x|s)$ is separable in $x$, $$F(x|s) = \sum_{k=1}^d f^k(x^k|s).$$  Define the gradient function $$v^k(x^k,s) = \frac{\partial}{\partial x^k} f^k(x^k|s).$$

{\bf (A\ref{sec:sa}.2)}  Let $F(x|s)$ be strongly convex in $x$ for every $s \in \mathcal{S}$ with parameter $m$; that is, $$\left( \nabla_x F(x|s) - \nabla_y F(y|s) \right)^T \, (x-y) \geq m || x - y ||^2_2.$$

{\bf (A\ref{sec:sa}.3)}  $F(x|s)$ is twice continuously differentiable in $x$ and $s$ for every $x \in \mathcal{X}$ and $s\in\mathcal{S}$.

{\bf (A\ref{sec:sa}.4)}  If the state space and decision space are well sampled around the point $(x,s)$ and every observation is unbiased, that is, $$\E \left[ \hat{\beta}(x',s') \right] = v(x',s'),$$ with $v(x,s)$ as in (A\ref{sec:sa}.1), then $$\lim_{n\rightarrow \infty} v_n(x,s) = v(x,s),$$ where $v_n(x,s)$ is defined by the projection in Equation (\ref{eq:projection}).

Now we discuss the optimality conditions.  Define $N_{\mathcal{X}}(x)$, the normal cone to $\mathcal{X}$ at $x$, $$N_{\mathcal{X}}(x) = \left\{y\in \R^d \g \langle y, x - v\rangle \geq 0, \, \forall v \in \mathcal{X}\right\}.$$Define the subgradient of $F(x|s)$ at $x$ as $\partial F(x|s)$.  Then, the point $x^*$ is a global minimizer of $F(x|s)$ for a fixed $s$ if and only if
\begin{equation}\label{eq:optimal}
0 \in \partial F(x^* | s) + N_{\mathcal{X}}(x^*).
\end{equation}We aim to show that as $n \rightarrow \infty$, $x^*_n(s)$ and only $x^*_n(s)$ satisfies Equation (\ref{eq:optimal}).

\begin{theorem}\label{thm:online}
Let $\mathcal{S}_D$ be a compact subset of $\mathcal{S}$ and assumptions (A\ref{sec:sa}.1)--(A\ref{sec:sa}.4) hold.  Then, for every $s \in \mathcal{S}_D$ and every $\epsilon > 0$, $$\p\left\{\left|x^*_n(s) - x^*(s) \right| > \epsilon \right\} \rightarrow 0$$ as $n \rightarrow \infty$.
\end{theorem}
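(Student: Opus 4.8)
\emph{Proof proposal for Theorem \ref{thm:online}.} The plan is to reduce the claim to pointwise (in $x$) convergence in probability of the approximate function and then invoke a standard argmin argument. Fix $s \in \mathcal{S}_D$ and $\epsilon > 0$. By strong convexity (A\ref{sec:sa}.2), $x^*(s)$ is the unique minimizer of $F(x|s)$ over $\mathcal{X}$, so it suffices to show $\bar{F}_n(\cdot|s) \to F(\cdot|s)$ uniformly on a compact neighborhood of $x^*(s)$ in probability; since $\bar{F}_n(\cdot|s)$ and $F(\cdot|s)$ are finite and convex, I would get this from pointwise-in-$x$ convergence in probability, after which the epiconvergence/argmin argument used for Theorem \ref{thm:samplePathConvergence} (now in probability) yields $x^*_n(s) \to x^*(s)$. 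Using separability (A\ref{sec:sa}.1), write $f^k_n(x^k|s)$, up to an additive constant, as the sum of the fitted slopes $v^k_{n,[i]}(s)$ against the increments $x^k_{[i]} - x^k_{[i-1]}$; then convergence of $f^k_n(x^k|s)$ to $f^k(x^k_{\min}|s) + \int_{x^k_{\min}}^{x^k} v^k(t,s)\,dt$ reduces to two facts: (i) the fitted slope at each sampled decision converges to the true marginal gradient, $v^k_{n,[i]}(s) \to v^k(x^k_{[i]},s)$, and (ii) the observed decisions $x^k_{[i]}$ become dense in, and bracket, a neighborhood of $x^*(s)^k$.

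For (i): the stochastic gradients are unbiased, $\E[\hat\beta(x',s')] = \nabla_x \E[F(x',s',Z)] = v(x',s')$, so assumption (A\ref{sec:sa}.4) gives the slope convergence \emph{provided} the state and decision spaces are well sampled around $(x^k,s)$. The state-sampling part I would get from recurrence of the exogenous process $\{S_i\}$ to every neighborhood of every $s\in\mathcal{S}_D$ together with the weight concentration established in Section \ref{sec:DP} (the analogue of (A\ref{sec:samplePath}.3)); the decision-sampling part is exactly fact (ii).

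Fact (ii) is the crux, because the decisions $x_i$ are chosen endogenously in Step 4, so there is a circular dependence --- good slope estimates require a well-sampled decision space, which requires good slope estimates. I would break it coordinatewise by a ``push'' argument. Suppose, for contradiction, that along a subsequence all decisions $x^k_i$ with $S_i$ in a small ball $B_\delta(s)$ stay to the left of $x^*(s)^k - \eta$ for some $\eta > 0$ (the lower-bound-active case being vacuous and the right-clustering case symmetric). The region $\{x^k \le x^*(s)^k - \eta\}$ is then well sampled, so by (i) the fitted marginal slopes converge to the true slopes there; by strong convexity and the optimality condition (\ref{eq:optimal}) applied coordinatewise, $v^k(t,s) \le v^k(x^*(s)^k,s) - m\eta \le -m\eta < 0$ on that region, and by continuity in $s$ (A\ref{sec:sa}.3) the same holds, up to $-m\eta/2$, for $v^k(t,S_i)$ with $S_i\in B_\delta(s)$. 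Hence for large $i$ the reconstruction $f^k_n(\cdot|S_i)$ is strictly decreasing up to its rightmost breakpoint, forcing $x^k_i$ (the minimizer over $\mathcal{X}$) to lie to the right of that breakpoint, hence to the right of $x^*(s)^k-\eta$ once the breakpoints have accumulated that far --- contradicting the supposition. So the decisions taken when $S_i\in B_\delta(s)$ must accumulate around $x^*(s)^k$, and letting $\delta\downarrow 0$ gives density and bracketing, which is (ii).

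Combining (i) and (ii) coordinatewise gives $\bar F_n(x|s)\to F(x|s)$ in probability for each fixed $x$ near $x^*(s)$; convexity upgrades this to uniform-on-compacts convergence in probability, and passing to the limit in $0\in\partial\bar F_n(x^*_n(s)|s)+N_{\mathcal{X}}(x^*_n(s))$ --- using continuity of $\nabla F$ from (A\ref{sec:sa}.3), closedness of the normal-cone map, and uniqueness of $x^*(s)$ from (A\ref{sec:sa}.2) --- yields $\p\{|x^*_n(s)-x^*(s)|>\epsilon\}\to 0$. The main obstacle is Fact (ii): breaking the circularity and carrying the coordinatewise push through a constraint set $\mathcal{X}$ that couples the coordinates; strong convexity (A\ref{sec:sa}.2) is what makes the push quantitative and ultimately forces the decisions across $x^*(s)$.
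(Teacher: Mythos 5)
Your proposal is essentially the paper's own argument: the paper likewise works coordinatewise via separability (A\ref{sec:sa}.1), places an $\epsilon/4md$-net on each decision axis, and shows by contradiction that any cell visited with positive limiting probability away from $x^{*,k}(s)$ would thereby be well sampled, so that (A\ref{sec:sa}.4) and strong convexity (A\ref{sec:sa}.2) force the estimated slopes to violate the optimality condition $0 \in v_n(x,s) + N_{\mathcal{X}}(x)$ there --- which is exactly your ``push'' argument. The only real difference is packaging: the paper verifies the optimality condition directly on the accumulation cells $\mathcal{L}^k(s)$ of the computed optima rather than routing through pointwise-then-uniform convergence of $\bar{F}_n$ and an argmin-consistency step, and it is, if anything, less explicit than you are about the endogeneity of the sampled decisions and the need for the state process to revisit neighborhoods of $s$.
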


\begin{proof}
Fix $s \in \mathcal{S}_D$ and $\epsilon > 0$.  
Consider the $k^{th}$ component of the decision variable.  Since $\mathcal{X}$ is compact, there exists an $x^k_{min} $ and $x^k_{max}$ such that $x^k_{min} \leq x^k \leq x^k_{max}$ for all $x \in \mathcal{X}$.  We place an $\epsilon/ 4md$-net on this axis such that $x^{*,k}(s)$ is in the center of one of the partitions, where $m$ is the strong convexity constant.  Label the regions $a_1^k, \dots, a_{M(k)}^k$.  This is done for all components of the decision variable, $k = 1, \dots, d$.

Note that for every iteration, the decision $x_n$ and the calculated optimum $x^*_n(s)$ are random variables.  Let $p_n^k(a_i,s)$ be the probability that $x^{*,k}_n(s) \in a_i^k$, \begin{equation}\notag
p_n^k(a_i,s) = \p \left\{ x^{*,k}_n(s) \in a_i^k\right\}, \ \ \ i = 1,\dots M(k).
\end{equation}Fix $\gamma > 0$.  Let $\mathcal{L}^k(s)$ be the set of partitions $a_i^k$ where there exists a subsequence $(n_j)_{j=1}^{\infty}$ such that
\begin{equation}\notag
\mathcal{L}^k(s) = \left\{ a_i^k \g \lim \inf_{n_j \rightarrow \infty} p_{n_j}^k(a_i^k,s) > \gamma\right\}.
\end{equation}That is, $\mathcal{L}^k$ is the set of all decisions sampled infinitely often with at least limiting probability $\gamma$.  Let $$A_{\epsilon}^k(s) = \left\{a_i^k \g  \left| a^k_i - x^{*,k}(s) \right| < \epsilon/d \right\}.$$  We will show that $\mathcal{L}^k(s)$ is non-empty and that $\mathcal{L}^k(s) \subset A_{\epsilon}^k(s)$; this is done in two phases.

{\bf Claim 1:  $\mathcal{L}^k(s) \neq \emptyset$.}  If $\gamma \leq 1/M(k)$, then at least one decision will be sampled infinitely often with at least probability $\gamma$.

{\bf Claim 2:  $\mathcal{L}^k(s) \subset A_{\epsilon}^k(s)$.}  Suppose $b \in \mathcal{L}^k(s)$ and $b \notin A^k_{\epsilon}(s)$.  Then, there exists an infinite subsequence $(n_i)_{i=1}^{\infty}$ such that $x_{n_i}^*(s) \in b$.  Since the marginal values, $v_n(x,s)$ are also continuous in $s$, there exists a set of $s'$ such that $\lim\inf p_{n_j}^k(b,s') > \gamma$ and $|v^k(x,s) - v^k(x,s') | < \epsilon/4md$ for every $x \in b$.  Denote this set by $B(s,b)$.  The states in set $B(s,b)$ will be sampled infinitely often, so by (A\ref{sec:sa}.2) and (A\ref{sec:sa}.4), there exists an $N$ such that for every $n \geq N$,
$$\left|v_n^k(x,s) - v(x,s) \right| < m*\epsilon/2md = \epsilon/2d.$$
But by (A\ref{sec:sa}.2) and (A\ref{sec:sa}.3), the function $x^*(s)$ is uniformly continuous in $s$ over $\mathcal{S}_D$.  Combining this fact with (A\ref{sec:sa}.1), for sufficiently small $\epsilon$,
$$0 \notin v_n(x,s) + N_{\mathcal{X}}(x)$$ for all $n \geq N$ and all $x \in b$.  Therefore, $b \notin \mathcal{L}^k(s)$, so Claim 2 is true and therefore the theorem holds.
\end{proof}

We now discuss the choice of weight functions.

\section{Weight functions}\label{sec:DP}
The choice of weight functions determines whether assumptions (A\ref{sec:samplePath}.3) and (A\ref{sec:sa}.4) are satisfied and which distributions of $F(x,s,\omega)$ satisfy them.  More importantly, however, the weight functions also determine how well $F_n(x|s)$ approximates $F(x|s)$ with finite sample sizes.  Before we discuss the specifics of individual weighting functions, let us discuss how weighting functions are constructed.

Weighting functions rely on density estimation procedures to approximate the conditional density $f(y|s)$, where $s$ is the state and $y$ is the response.  Then the mean conditional response $\E[Y | s] = \int y f(y|s)dy$ is calculated, which is the object of interest.  The conditional density is approximated by a weighted sum of observations, $\hat{f}_n(y|s)$; it is either expressly composed by observational weights, as in the case of kernel regression, or can be decomposed into observational weights, as in the case of Dirichlet process regression.

In this section, we discuss two weighting schemes in detail:  kernels and the Dirichlet process similarity measure.  Kernels been well studied and are easy to implement as a weighting scheme.  However, they often do not perform well with more complicated problems, such as those with a moderate or large number of covariates.  Therefore, we propose a Dirichlet process similarity measure when a richer class of weighting functions is required.  We discuss kernel weights in Subsection \ref{subsec:kernel} and the Dirichlet process similarity measure along with relevant background material in Subsection \ref{subsec:DP}.

\subsection{Kernel weights}\label{subsec:kernel}
Kernel weights rely on kernel functions, $K(s)$, to be evaluated at each observation to approximate the conditional density.  A common choice for $K$ with continuous covariates is the Gaussian kernel, $$K_h(s) = (2\pi h)^{-1/2} \exp\{-s^2/2h\},$$ where the variance $h$ is called the bandwidth.  Kernel weights have the advantage of being simple and easy to implement.  The simplest and most universally applicable weighting scheme is based on the Nadaraya-Watson estimator~\citep{Na64,Wa64}.  If $K(s)$ is the kernel and $h_n$ is the bandwidth after $n$ observations, define\begin{equation}\notag
w_n(s,S_i) = \frac{K_{h_n}\left(s-S_i\right)}{ \sum_{j=0}^{n-1}K_{h_n}\left(s-S_j\right)}.
\end{equation}  In the case of function-based optimization, the function estimate $\bar{F}_n(x|s)$ is
\begin{equation}\notag
\bar{F}_n(x|s) =\frac{\sum_{i=0}^{n-1} K_{h_n}\left(s-S_i\right) F\left(x,S_i,Z(\omega_{i+1})\right)}{\sum_{j=0}^{n-1}K_{h_n}\left(s-S_j\right)}.
\end{equation}
Kernel methods have few requirements to satisfy assumptions (A\ref{sec:samplePath}.3) and (A\ref{sec:sa}.4).  For function-based optimization, assumption (A\ref{sec:samplePath}.3) is satisfied if:
\begin{enumerate}
	\item $F(x,s,Z)$ has finite variance for every $x \in \mathcal{X}$, $s\in \mathcal{S}$.
	\item $F(x|s) = \E[F(x,s,Z)]$ is continuous in $s$ for every $x \in \mathcal{X}$.
\end{enumerate}Sufficient conditions for gradient-based assumption (A\ref{sec:sa}.4) are similar:
\begin{enumerate}
	\item $\hat{\beta}(x,s,Z)$ has finite variance for every $x \in \mathcal{X}$, $s\in \mathcal{S}$.
	\item $v(x,s) = \nabla_x\E[F(x,s,Z)]$ is continuous in $s$ and $x$.
\end{enumerate}The first condition is assured by assumption (A\ref{sec:sa}.3), so only the second condition must be checked.

Despite ease of use and a guarantee of convergence, kernel estimators require a well-sampled space, are poor in higher dimensions and are highly sensitive to bandwidth size.  There is a large literature on bandwidth selection~\citep{FaGi95,JoMaSh96}, but it is usually chosen by cross-validation.  To overcome many of these difficulties, we propose using a Dirichlet process mixture model over the states as an alternative weighting scheme.

\subsection{Dirichlet process weights}\label{subsec:DP}
One of the curses of dimensionality is sparseness of data.  As the number of dimensions grows, the distance between observations grows exponentially.  In kernel regression, this means that only a handful of observations have weights that are effectively non-zero.  Instead of basing weights on a distance that grows quickly with the number of dimensions, we would like to average responses for ``similar'' observations.  Regions of ``similarity'' can be defined with a clustering algorithm.  Dirichlet process mixture models (DPMMs) are Bayesian nonparametric models that produce a distribution over data partitions~\citep{Pi96,IsJa03}.  They were introduced in the 1970's~\citep{Fe73,BlMa73,An74}, but have gained popularity in the last fifteen years as more powerful computers have allowed posterior computation~\citep{Ma94,EsWe95}.  DPMMs have been used for classification~\citep{ShNe07}, clustering~\citep{MeSi02,DaMa05} and density estimation~\citep{EsWe95,MaMu98}, but we will use the partitioning feature directly.

Dirichlet process mixture models are easiest to understand in the context of density estimaiton.  The idea is that complicated distributions, such as the distribution of the state variable, can be modeled as a countable mixture of simpler distributions, such as Gaussians.  To make the model more flexible, the number of components is allowed to be countably infinite.  The Dirichlet process is a Bayesian model that places a prior on the mixing proportions, leading to a small number of components with non-trivial proportions.  Because an infinite number of components are allowed, the Dirichlet process circumvents the issue of determining the ``correct'' number of components, as is necessary in algorithms like k-means.

The clustering/partitioning property of the Dirichlet process is derived from the mixture assumption.  Two observations are in the same cluster or partition if they are generated by the same mixture component.  The query state $s$ is also placed in a cluster; the estimated response for $s$ is simply the average of all the responses associated with states in that cluster.  However, because the data labels, component locations and mixing proportions are not known, the Dirichlet process produces a distribution over clusterings.  We use Monte Carlo methods to integrate over the clusterings.

In this subsection, we discuss the basic properties of Dirichlet process mixture models, how they can be used to generate a weighting function, how it can be approximated, and finally what is required for it to satisfy the optimization algorithm assumptions.






\subsubsection{Dirichlet process mixture models}
A mixture model represents a distribution, $g_0(s)$, as a weighted sum of simpler distributions, $g(s\g \theta_i)$, which are parameterized by $\theta_i$,
\begin{equation}\notag
g_0(s) = \sum_{i=1}^{K} p_i g(s \g \theta_i).
\end{equation}Here, $p_i$ is the mixing proportion for component $i$.  For example, if $g_0(s)$ is a univariate, continuous distribution, we may wish to represent it as a sum of Gaussian densities,
\begin{equation}\label{eq:gsnMixture}
g_0(s) = \sum_{i=1}^{K} p_i \, \frac{1}{\sqrt{2\pi \sigma_i^2}} e^{-\frac{1}{2 \sigma_i^2} (s-\mu_i)^2}.
\end{equation}In Equation (\ref{eq:gsnMixture}), $g$ is the Gaussian density; it is parameterized by $\theta_i = (\mu_i, \sigma_i^2)$, the mean and variance for component $i$. 

The difficulties of using a mixture model are 1) determining parameters $(p_i,\theta_i)_{i=1}^K$, and 2) determining $K$.  There are many optimization based algorithms to find $(p_i,\theta_i)$ (see \citep{HaTiFr09} for a review), but fewer ways to find a good value for $K$.  However, if we assume $K = \infty$ with only a finite number of components with weights $p_i$ that are effectively non-zero, then we effectively do not have to choose $K$.

We can use a Dirichlet process (DP) with base measure $\G_0$ and concentration parameter $\alpha$ to place a distribution over the joint distribution of $(p_i, \theta_i)$, the mixture proportion and location of component $i$~\citep{Fe73,An74}.  A Dirichlet process effectively places a discrete probability distribution over the parameter ($\theta$) space; the $\theta$'s that are given positive probability in that space are components of the mixture model.  The probability associated with $\theta$ is the component weight $p$.  We shall call this distribution over the parameter space $P$.  Note that a Dirichlet process prior does not tell us deterministically what $(p_i,\theta_i)$ \textit{will} be; instead it places a distribution over what it \textit{could} be.  For that reason, $P$ is actually a random measure.  A Dirichlet process mixture model is constructed as follows.

Assume that data $S_1,\dots,S_n$ are drawn from the same distribution, which is modeled by a mixture over distribution $G(\theta)$.  We let $g(\cdot \g \theta)$ be the density, while $G(\theta)$ is the distribution, for example $N(\mu,\sigma^2)$.  Observation $S_i$ is drawn from a component of that model, $G(\theta_i)$, with parameter $\theta_i$.  Conditioned on $\theta_i$, $S_i$ has the distribution $G(\theta_i)$.  Now, let $P$ be the mixing distribution over $\theta$; we give $P$ a Dirichlet process prior with base distribution $\G_0$ and concentration parameter $\alpha$.  In sum, this produces a Dirichlet process mixture model.  We use the following hierarchical model for the DPMM,
\begin{align}\label{eq:mixture}
P & \sim DP(\alpha, \G_0),\\\notag
\theta_i | P & \sim P,\\\notag
S_i | \theta_i & \sim G(\theta_i).
\end{align}Here, ``X $\sim$ Y'' means ``$X$ has the distribution of $Y$.''  Note the conditional independence at every level of the Model (\ref{eq:mixture}); for example, given $\theta_i$, $S_i$ is independent of $P$ and the other $S_j$.  Distributions $F$ and $\G_0$ often depend on additional hyperparameters; these will be explained in context later. 

A Dirichlet process is used as a prior on $P$ because it produces an almost surely discrete distribution over parameters.  This is demonstrated when we integrate out $P$ from Model (\ref{eq:mixture}) to obtain a conditional distribution of $\theta_n | \theta_{1:n-1}$~\citep{BlMa73}
\begin{equation}\label{eq:polya}
\theta_n \g \theta_1, \dots, \theta_{n-1} \sim \frac{1}{\alpha + n -1} \sum_{i=1}^{n-1} \delta_{\theta_i} + \frac{\alpha}{\alpha + n - 1} \G_0.
\end{equation}Here, $\delta_{\theta}$ is the Dirac measure with mass at $\theta$.  Equation (\ref{eq:polya}) is known as a \textit{Polya urn posterior}.  Note that $\theta_n$ has positive probability of assuming the value of one of the previously observed $\theta_i$, but it also can take a completely new value drawn from $\G_0$ with positive probability.  The parameter $\alpha$ controls how likely $\theta_n$ is to take a new value.

We now give an example of a DPMM.  Suppose that $g_0(s)$ is univariate and continuous.  An infinite Gaussian mixture model is a good approximation, parameterized by $\theta_i = (\mu_i,\sigma_i^2).$  Let $S_1, \dots, S_n$ be drawn from this distribution.  The mixture model can be written as,
\begin{align}\label{eq:gsnModel}
P & \sim DP(\alpha, \G_0),\\\notag
\theta_i=(\mu_i,\sigma_i^2) \g P & \sim P,\\\notag
S_i \g \theta_i & \sim N(\mu_i, \sigma_i^2).
\end{align}Often, $\G_0$ is chosen to be conjugate to $G$ to ease posterior sampling; in this case, the conjugate $\G_0$ is Normal-Inverse-Gamma with hyperparameters $(\lambda_0,\nu_0, \alpha_0, \beta_0).$  $\alpha$ is also a hyperparameter; it is usually given a Gamma prior or set equal to 1.  We now discuss DPMM weights.

\subsubsection{The Dirichlet process weights}\label{sec:DPsimilarity}
Dirichlet process mixture models intrinsically produce a partition structure~\citep{Pi96,IsJa03}, that is, a clustering of the observed data.  We can see this in the Polya urn posterior of Equation (\ref{eq:polya}); each hidden parameter has positive probability of taking the same value as another parameter.  If two parameters have the same value, the associated observations are in the same partition/cluster.  Because a Dirichlet process places a distribution over component parameters and probabilities, which form a partition of the data, it also places a distribution over all partition structures of the data.  We use the partition structure to induce weights on the observations by giving equal weights to all observations that are in the same partition as the current observed state.

Let the cluster/partition $C_i$ be defined as the set of all observations that have the same parameter, $C_i = \{j : \theta_j = \theta_i^*\}$.  Let $\pp = \{ C_1, \dots, C_{n(\pp)}\}$ be the partition of the observations $\{1,\dots,n\}$.  Given a partition $\pp$, there are $n(\pp)$ clusters, generating $n(\pp)$ unique parameter values, $\theta_1^*,\dots,\theta_{n(\pp)}^*$.  Now suppose that we know the partition $\pp$.  Given this partition, we wish to place our observed state $s$ into one of the partitions.  We do not know its partition, but we can generate a probability that it is in cluster $C_i$,
\begin{align}\notag
p_s(C_i|\pp) &= \p(s \in C_i \g \pp, S_{1:n}) \\\label{eq:assign}
& \propto |C_i| \int g(s\g \theta^*) dH_{C_i} (\theta^*),
\end{align}where $|C_i|$ is the number of elements in $C_i$, $H_{C_i}(\theta^*)$ is the posterior distribution of $\theta^*$ conditioned on the base measure $\G_0$ and the set of observations $\{S_j  \, : \, S_j \in C_i \}$.  Sometimes it is impossible to compute the integral in Equation (\ref{eq:assign}) and it is approximated by Monte Carlo integration conditioned on $\theta^*$.

The probability is calculated for each cluster $C_i$, $i = 1,\dots, n(\pp)$.  Given the probabilities $(p_s(C_i|\pp))_{i=1}^{n(\pp)}$, the weighting function is defined by
\begin{equation}\label{eq:condDPW}
w_n(s,S_i) \g \pp = \sum_{j = 1}^{n(\pp)} \frac{p_s(C_j\g \pp)}{|C_j|} \1_{\{S_i \in C_j\}}.
\end{equation}Equation (\ref{eq:condDPW}) is different from the conditional Polya urn posterior of Equation (\ref{eq:polya}):  unlike the observed states, the query state is not allowed to be in a cluster by itself.  We do this because we often do not have prior information on the response distribution.

Equation (\ref{eq:condDPW}) is conditioned on a partition structure, but the Dirichlet process produces a distribution over partition structures.  Let $\pi(\pp)$ be the partition probability function, which is a prior distribution for partitions $\pp$, 
\begin{equation}\label{eq:eppf}
\pi(\pp) = \frac{\alpha^{n(\pp)-1} \prod_{j=1}^{n(\pp)} (|C_j|-1)!}{\prod_{j=1}^{n-1}(\alpha+j)}.
\end{equation}The posterior distribution, $\pi(\pp|S_{1:n})$ has the form
\begin{equation}\label{eq:partitionPosterior}
\pi(\pp|S_{1:n}) \propto \pi(\pp) \prod_{j=1}^{n(\pp)} \int_{\T} \prod_{i \in C_j} g(S_i | \theta) \, \G_0(d\theta).
\end{equation}We can combine Equations (\ref{eq:condDPW}) and (\ref{eq:partitionPosterior}) to obtain unconditional weights,
\begin{equation}\label{eq:dpWeights}
w_n(s,S_i) = \sum_{\pp} \pi(\pp|S_{1:n}) \left(\sum_{j = 1}^{n(\pp)} \frac{p_s(C_j\g \pp)}{|C_j|} \1_{\{S_i \in C_j\}}\right).
\end{equation}The conditional weights in Equation (\ref{eq:condDPW}) are easy to compute, but it is nearly impossible to compute $\pi(\pp|S_{1:n})$ or even enumerate all possible partitions, $\pp$.  Therefore, we approximate Equation (\ref{eq:dpWeights}) by performing a Monte Carlo integration over the partitions.  We obtain $M$ i.i.d. posterior partition samples, $(\pp^{(m)})_{m=1}^M$ and set
\begin{equation}\label{eq:dpWeightsApprox}
w_n(s,S_i) \approx \frac{1}{M} \sum_{m=1}^M \sum_{j = 1}^{n(\pp^{(m)})} \frac{p_s(C_j\g \pp^{(m)})}{|C_j|} \1_{\{S_i \in C_j\}}.
\end{equation}We now show how to obtain $(\pp^{(m)})_{m=1}^M$ given $S_1,\dots,S_n$ using Gibbs sampling.

\subsubsection{Gibbs Sampler for the State Variable}\label{sec:Gibbs}
Markov Chain Monte Carlo (MCMC)~\citep{Ne00} is the most popular and simple way to obtain partition structure samples, $(\pp^{(m)})_{m=1}^M$.  MCMC is based on constructing a Markov chain that has a limiting distribution equal to the partition structure posterior distribution.  The most common way to implement MCMC is by Gibbs sampling.  In Gibbs sampling, the partition $\pp=\{C_1,\dots,C_{n(\pp)}\}$ and possibly the parameters $\theta^* = (\theta_1^*,\dots,\theta_{n(\pp)}^*)$ form the state of the Markov chain.  If $\G_0$ is a conjugate prior for $G$, then $\theta^*$ is not needed and the sampler is called ``collapsed''; otherwise, $\theta^*$ is included.  Every iteration we choose an $i$ sequentially, with $1\leq i \leq n$, and remove $S_i$ from the clustering.  Then, we randomly assign it to either 1) one of the existing clusters, or 2) to a new cluster.  The assignment probabilities are chosen such that the limiting distribution of the Markov chain is the posterior distribution $\pi(\pp\g S_{1:n})$.  We follow Algorithm 3 of \citep{Ne00}; this algorithm is designed for problems with base measure $\G_0$ conjugate to the state conditional distribution $G$.  However, efficient algorithms for non-conjugate base measures can also be found in \citep{Ne00}. 

Let $c_i$ be the partition number for observed state $S_i$ and let $n_c$ be the number of observations in cluster/partition $c$.  The partition $\pp$ can readily be reconstructed from $c_1,\dots,c_n$; $c_1,\dots,c_n$ have the same relation to $C_1,\dots, C_{n(\pp)}$ that $\theta_1,\dots,\theta_n$ have to $\theta^*_1,\dots,\theta^*_{n(\pp)}$.  That is, $C_i = \{j \, : \, c_j = i\}.$

Gibbs sampling repeatedly samples a Markov chain where the limiting distribution is the posterior distribution of Equation (\ref{eq:mixture}).  The state of the chain is $\mathbf{c} = (c_1,\dots,c_n)$.  We move around the space by changing the partition for one observed state $S_i$ probabilistically while holding the partitions of all the other states fixed.  Let $$\mathbf{c}_{-i}=(c_1,\dots,c_{i-1},c_{i+1},\dots,c_n),$$ and $n_{-i,c}$ be the number of observations in partition $c$ when $i$ has been removed.  Fixing $\mathbf{c}_{-i}$, we compute the transition probabilities for $c_i$ as follows:
\begin{align}\notag
\p(c_i = c | \mathbf{c}_{-i}, S_i)& =  b \frac{n_{-i,c} }{n + \alpha} \int g(S_i| \theta^*) dH_{-i,c}(\theta^*),& \ \mathrm{ if } \ c=c_j \ \mathrm{ for } \ j \neq i,\\\notag
\p(c_i \neq c_j \ \forall \, j \neq i |   \mathbf{c}_{-i}, S_i)& = b \frac{\alpha}{n + \alpha} \int g(S_i|\theta^*) d\G_0(\theta^*), & \ \mathrm{ otherwise.}
\end{align}Here, $b$ is a normalizing constant, $H_{-i,c}(\theta^*)$ is the posterior distribution conditioned on the base measure $\G_0$ and set of observations $\{S_j : \theta_j = \theta_c^*, \, j \neq i \}$.  Because $\G_0$ is conjugate a conjugate prior for $g$, $H_{-i,c}(\theta^*)$ has a closed form solution~(see \citep{GeCaSt04} for a comprehensive list of posterior forms).  The chain is run until some convergence criteria are met.  Convergence is notoriously hard to diagnose, but general rules of thumb include setting a very large number of ``burn-in'' iterations and discarding all observations before the burn-in or running a number of chains and comparing the within and between sequence parameter variance or posterior probability.  See \citep{GeCaSt04} for a more thorough discussion of convergence criteria.  The Gibbs sampler for $(\pp^{(m)})_{m=1}^M$ is given in Algorithm \ref{alg:weights}.

\begin{algorithm}[t]
\caption{Gibbs sampler for $\pp|S_{1:n}$ with a conjugate base measure}
\label{alg:weights}
\begin{algorithmic}[1]
\REQUIRE Observed states $S_1,\dots,S_n$.
\STATE Initialize $\mathbf{c}$, set $m = 1$.
\WHILE{$m \leq M$}
\FOR{$i = 1$ to $n$}
  \STATE Sample $c_i | \mathbf{c}_{-i}$ using transition probabilities\begin{align}\notag
  \p(c_i = c | \mathbf{c}_{-i}, S_i)& =  b \frac{n_{-i,c} }{n + \alpha} \int g(S_i| \theta^*) dH_{-i,c}(\theta^*), \ c=c_j, \, j \neq i,\\\notag
\p(c_i \neq c_j \, \forall \, j|   \mathbf{c}_{-i}, S_i)& = b \frac{\alpha}{n + \alpha} \int g(S_i|\theta^*) d\G_0(\theta^*), \ \ \ \ \, \mathrm{ otherwise}
\end{align}
\ENDFOR
\IF{Convergence criteria are satisfied}
  \STATE Construct $\pp$ from $\mathbf{c}$, $$C_i = \{j \, : \, c_j = i\}, \ \ i = 1,\dots, \max(\mathbf{c}).$$
  \STATE Set $\pp^{(m)} = \pp$.
  \STATE Set $m = m+1$.
\ENDIF
\ENDWHILE
\end{algorithmic}
\end{algorithm}

We now discuss under which conditions Dirichlet process weights satisfy the convergence conditions for stochastic search with a state variable.

\subsubsection{Convergence conditions}Assumptions (A\ref{sec:samplePath}.3) and (A\ref{sec:sa}.4) concern the consistency of the regression estimator, which in turn depends on the underlying observation distribution and weights.  Dirichlet process weights also produce a density estimate; weak consistency of this density estimate is enough to satisfy the assumptions.

Posterior consistency is the notion that the posterior distribution of the DP mixture model in Equation (\ref{eq:mixture}) accumulates in neighborhoods ``close'' to the true distribution of the observations.  For weak consistency, we would like it to accumulate in weak neighborhoods.  

Weak consistency for DPMMs depends on both the model and the base measure; it has been examined in numerous articles~\citep{BaScWa99,GhGhRa99,GhRa03,Wa04,To06,RoDuGe09}.  Gaussian DPMMs with a conjugate base measure, here the Normal-Inverse Wishart, are weakly consistent for many continuous densities.  See \citep{GhGhRa99}, \citep{Wa04}, \citep{To06} and \citep{RoDuGe09} for conditions.  \citep{GhRa03} also show that DPMMs are consistent for finite distributions provided that the base measure $\G_0$ gives full support to the required probability simplex.

Assumption (A\ref{sec:samplePath}.3) is satisfied if:
\begin{enumerate}
	\item The DPMM and base measures are weakly consistent for the true state distribution $g_0(s)$.
	\item The distribution of $F(x,s,Z)$ is weakly convergent in $s$ for every $x\in \mathcal{X}$.  That is, $F(x,s',Z) \Rightarrow F(x,s,Z)$ as $s'\rightarrow s$, where ``$\Rightarrow$'' denotes weak convergence.
	\item  $F(x,s,Z)$ is almost surely bounded and continuous in $x$ and $s$.
\end{enumerate}These three conditions combine to produce a weakly convergent Bayes estimate of the conditional density, which is generated by the Dirichlet process weights on the observations.  These are much heavier conditions than those for kernel-based weights.  

Because random sampling is required in all current weak consistency results for DPMMs, it is an open question under which conditions Dirichlet process weights satisfy assumption (A\ref{sec:sa}.4).  We now study function-based optimization, gradient-based optimization and the weighting functions empirically.

\section{Empirical analysis}\label{sec:numbers}
We analyzed the performance of function-based and gradient-based optimization algorithms in conjunction with kernel- and Dirichlet-based weights on the hour ahead wind commitment problem and the two-product newsvendor problem.

\subsection{Multi-product constrained newsvendor problem}
A multi-product newsvendor problem is a classic operations research inventory management problem~\citep{PeDa99}.  In the two product problem, a newsvendor is selling products $A$ and $B$.  She must decide how much of each product to stock in the face of random demand, $D_A$ and $D_B$.  $A$ and $B$ can be be bought for $(c_A,c_B)$ and sold for $(p_A,p_B)$, respectively.  Any inventory not sold is lost.  Let $(x_A,x_B)$ be the stocking decisions for $A$ and $B$ respectively; it is subject to a budget constraint, $b_A \, x_A + b_B \, x_B \leq b$, and a storage constraint, $r_A \, x_A + r_B \, x_B \leq r.$  An observable state $S=(S_1,S_2)$ contains information about $D_A$ and $D_B$.  The problem is,
\begin{align}\label{eq:newsvendor}
 \max_{x_A, \, x_B}  & \, - c_A \, x_A - c_B \, x_B + \E\left[ p_A \min\left(x_A,D_A\right) + p_B \min \left(x_B, D_B\right)| S = s\right]\\\notag
\mathrm{subject \ to:} \ & b_A \, x_A + b_B \, x_B \leq b, \\\notag
&  r_A \, x_A + r_B \, x_B \leq r.
\end{align}

We generated data for Problem (\ref{eq:newsvendor}) in the following way.  
Demand and two state variables were generated in a jointly trimodal Gaussian mixture with parameters $D_A = 1/3*[N(10,4)+N(28,5)+N(30,5)]$, $D_B = 1/3*[N(10,3)+N(22,9)+N(35,12)]$; there were two state variables, $S_1$ and $S_2$; parameters were also included in the Gaussian trimodal mixture; all parameters were generated as follows: $\mu_{a,i} \sim N(0,3), \, \sigma_{a,i}^2) \sim Inverse \, Gamma(1,1)$, $a = A,B$, $i = 1,2,3$.  

\begin{figure}[t]\label{fig:newsvendorState}
\begin{center}
\includegraphics*[width=6.5in]{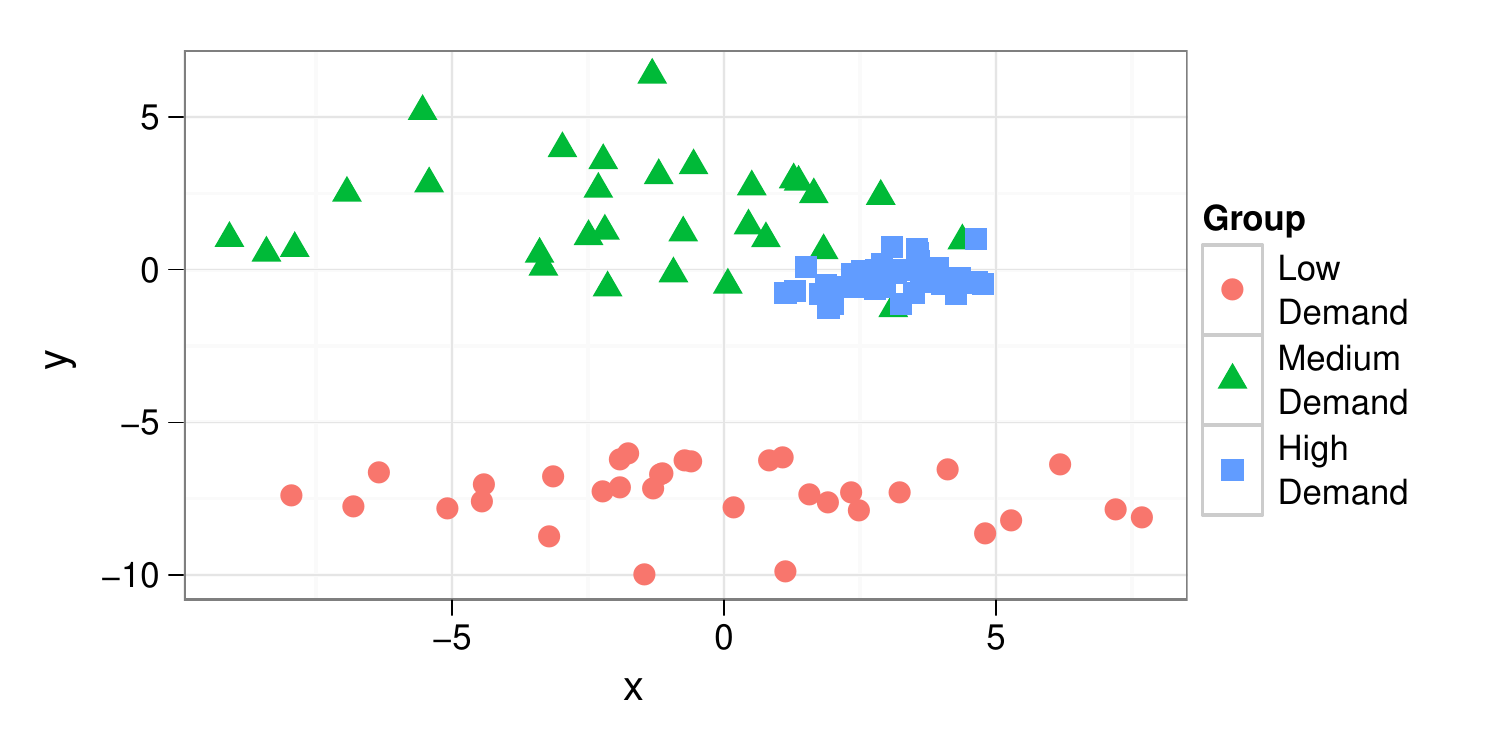}
\end{center}
\caption{State distribution labeled by demand level (low, medium, high).}
\end{figure}

\begin{figure}[t]\label{fig:newsvendor}
\begin{center}
\includegraphics*[width=6.5in]{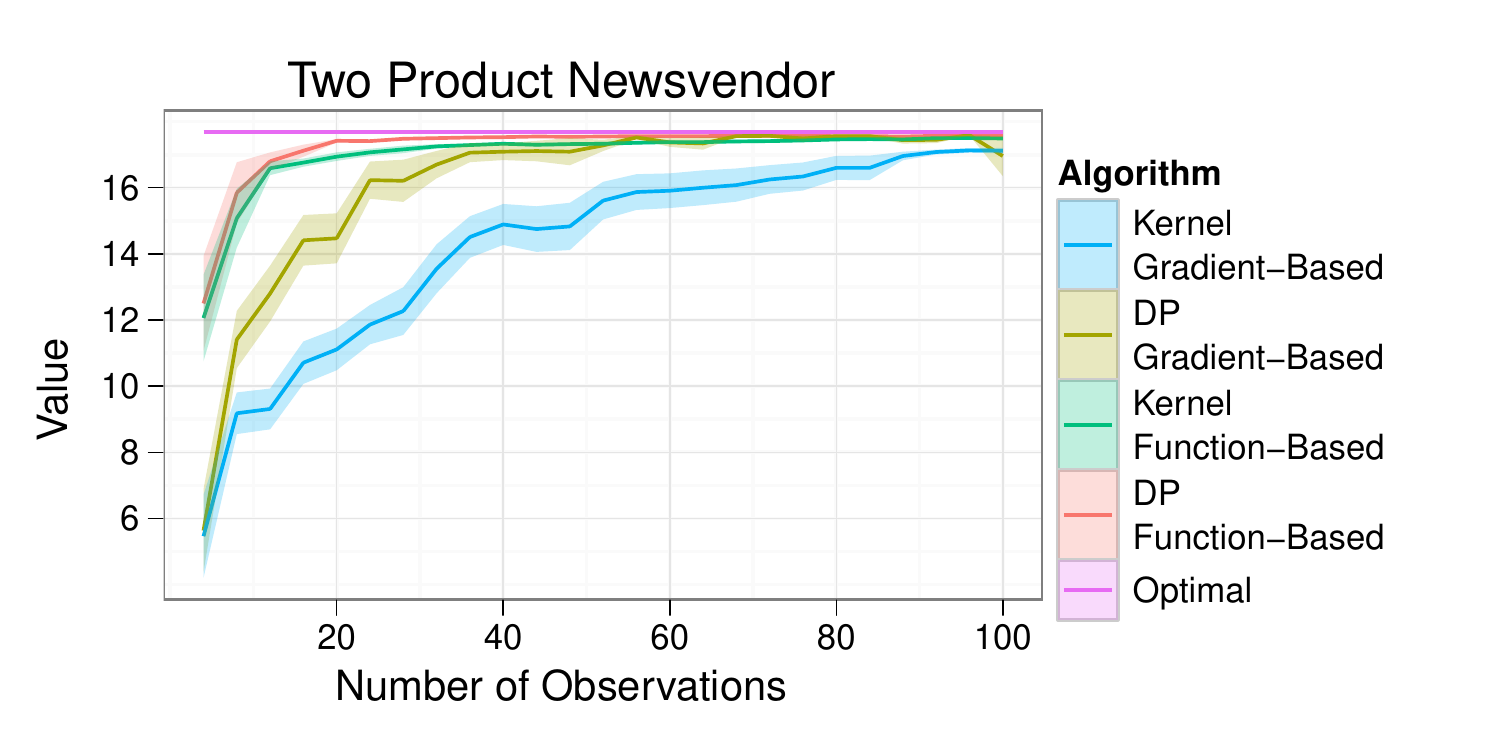}
\end{center}
\caption{Gradient-based  and function-based methods as a function of number of data points sampled.  Results are averaged over 100 test problems with observed demand.}
\end{figure}

\subsubsection{Newsvendor Competitors} The following methods were compared:
\begin{enumerate}
	\item {\it Function-based with kernel.}  Bandwidth is selected according to the ``rule of thumb'' method of the \textit{np} package for R, $$h_j = 1.06 \sigma_j n^{-1/(4 + d)},$$ where $\sigma_j$ is defined as min(sd, interquartile range/1.349)~\citep{HaRa08}.
	\item {\it Gradient-based with kernel.}  Bandwidth selection was the same as in the function-based case; decisions were made online after an initialization period of 5 random decisions.
	\item {\it Function-based with Dirichlet weights.}  We used the following hierarchical model,
\begin{align}\label{eq:newsvendorDP}
P & \sim DP(\alpha, \G_0), \\\notag
\theta_i = (\mu_{i}, \sigma_{i}^2)| P & \sim P, \\\notag
S_{i,j} | \theta_i & \sim N(\mu_{i,j},\sigma_{i,j}^2), \ j = 1,2.\end{align}Conjugate base measures were used.  Posterior samples were drawn using Gibbs sampling with a fully collapsed sampler run for 500 iterations with a 200 iteration burn-in with samples taken every 5 iterations.  
	\item {\it Gradient-based with Dirichlet weights.}  Dirichlet process mixture model was as in Model \ref{eq:newsvendorDP}; base measures and sampling procedures were the same.  Decisions were made online after an initialization period of 5 random decisions.
	\item {\it Optimal.}  These are the optimal decisions with known mixing parameters and unknown components.  That is, we know that the distribution of $D_A$ is $1/3*[N(10,4)+N(28,5)+N(30,5)]$, we have similar knowledge for $D_B$, $S_1$ and $S_2$ and we know their joint, but we do not know from which component (1, 2 or 3) the observation was drawn.  Decisions were made by solving the unconstrained newsvendor problem, projecting onto the constraint set (if necessary) and then performing a boundary search until the the optimal decision was reached.
\end{enumerate}

\subsubsection{Newsvendor Results}  Decisions were made under each regime over eight sample paths; 100 test state/demand pairs were fixed and decisions were made for these problems given the observed states/decisions in the sample path for each method.  The state distribution is shown in Figure \ref{fig:newsvendorState}.  Results are given in Figure \ref{fig:newsvendor}.  The kernel and Dirichlet process weights performed approximately equally for each method.  Function-based methods did better than gradient-based methods as the function-based methods used more available information.  Performance of Dirichlet weights versus kernel weights depends mainly on the underlying state distribution; there are cases for both in which one is preferable to the other.

\subsection{Hour ahead wind commitment}\label{sec:windnumbers}
The details of the hour ahead wind commitment problem from Section \ref{sec:intro} are as follows.  A wind farm manager must decide how much energy to promise a utility an hour in advance, incorporating knowledge about the current state of the world.  The decision is the amount of wind energy pledged, a scalar variable.  If more energy is pledged than is generated, the difference must be bought on the regulating market, which is expensive with a price that is unknown when the decision is made; otherwise, the excess is lost.  The goal is to maximize expected revenue.  The observable state variable is the time of day, time of year, wind history from the past two hours, contract price and current regulating price,
\begin{center}
\begin{tabular}{r @{}c@{} l r @{}c@{} l }
  $T^D_i$\ \ &$=$& \ time of day, & $T^Y_i$\ \ & $=$ & \ time of year,  \\
  $P^R_i$\ \ &$=$& \ current spot price, & $P^C_i$ \ \ & $=$ & \ contract price,\\
  $W_{i-1}$\ \ &$=$&\ wind speed an hour ago, & $W_{i}$ \ \ & $=$ & \ current wind speed,\\
  $S_i$\  \ &$=$& \ observable state variable & & $=$ & \ $ (T^D_i, T^Y_i, P^C_i, P^S_i,W_i,W_{i-1})$,\\
  $x_i$\ \ &$=$& \ amount of energy pledged, &  $Y_{i+1}(x)$\ \ & $=$ & \ $P^C_i \, x \, - P^S_{i+1} \, \max \left(x - W_{i+1}, 0\right)$.
\end{tabular}
\end{center}The revenue that the wind farm receives, $Y_{i+1}(x)$, depends on the variables $P^S_{i+1}$ and $W_{i+1}$, which are not known until the next hour.  We used wind speed data from the North American Land Data Assimilation System with hourly observations from 2002--2005 in the following locations:
\begin{enumerate}
	\item {\it Amarillo, TX.} Latitude: 35.125 N, Longitude: 101.50 W.  The data have strong daily and seasonal patterns.  The mean wind level is 186.29 $(m/s)^3$ with standard deviation 244.86.
	\item {\it Tehachapi, CA.} Latitude: 35.125 N, Longitude: 118.25 W.  The data have strong seasonal patterns.  The mean wind level is 89.45 $(m/s)^3$ with standard deviation 123.47.
\end{enumerate}

Clean regulating and contract price data for the time period were unavailable, so contract prices were generated by Gaussian random variables with a mean of 1 and variance of 0.10.  Regulating prices were generated by a mean-reverting (Ornstein-Uhlenbeck) process with a mean function that varies by time of day and time of year~\citep{Sc97}.  The data were analyzed separately for each location; they were divided by year, with one year used for training and the other three used for testing.  

\subsubsection{Wind Competitors} The following methods were compared on this dataset:

\begin{enumerate}
	\item {\it Known wind.}  The wind is known, allowing maximum possible commitment, $x_i = W_{i+1}(\omega_{i+1}).$  It serves as an upper bound for all of the methods.

	\item {\it Function-based with kernel weights.}  Function-based optimization where the weights are generated by a Gaussian kernel.  Bandwidth is selected according to the ``rule of thumb'' method of the \textit{np} package for R, $$h_j = 1.06 \sigma_j n^{-1/(4 + d)},$$ where $\sigma_j$ is defined as min(sd, interquartile range/1.349)~\citep{HaRa08}.

	\item {\it Function-based with Dirichlet process weights.}  Function-based optimization with Dirichlet process based weights.  We model the state distribution with the following hierarchical model,
\begin{align}\notag
P & \sim DP(\alpha, \G_0), &
\theta_i| P & \sim P,\\\notag
T_i^D |\theta_i & \sim \mathrm{von \ Mises}(\mu_{i,D},\phi_D),&
T_i^Y | \theta_i & \sim \mathrm{von \ Mises}(\mu_{i,Y},\phi_Y),\\\notag
P_i^C | \theta_i & \sim N(\mu_{i,C},\sigma^2_{i,C}),&
P_i^R | \theta_i & \sim N(\mu_{i,R},\sigma^2_{i,R}),\\\notag
W_i | \theta_i & \sim N(\mu_{i,W1},\sigma_{i,W1}^2),&
W_{i-1} | \theta_i & \sim N(\mu_{i,W2},\sigma_{i,W2}^2),
\end{align}\vspace{-0.35in}
\begin{equation}\notag
\theta_i = (\mu_{i,D}, \mu_{i,Y}, \mu_{i,C},\sigma_{i,C}^2, \mu_{i,S}, \sigma_{i,S}^2, \mu_{i,W1}, \sigma_{i,W1}^2, \mu_{i,W2},\sigma_{i,W2}^2).
\end{equation}We modeled the time of day, $T_i^D$, and year, $T_i^Y$, with a von Mises distribution, an exponential family distribution over the unit sphere; the dispersion parameters, $\phi_D$ and $\phi_Y$, are hyperparameters.  The base measure was Normal-Inverse Gamma for $P_i^C$, $P_i^R$, $W_i$ and $W_{i-1}$ and uniform for the means of $T_i^D$ and $T_i^Y$. 100 posterior samples were drawn using Gibbs sampling with a collapsed sampler for all conjugate dimensions after a 1,000 iteration burn-in and 10 iteration pulse between samples.  

	\item{\it Ignore state.}  Sample average approximation is used, $$\bar{F}_n(x|s) = \frac{1}{n} \sum_{i=0}^{n-1} Y_{i+1}(x).$$
\end{enumerate}
\begin{table}[t]
\begin{center}
\begin{small}
\begin{sc}
\begin{tabular}{l  @{}r @{}r @{}r @{}r @{}r @{}r @{}r @{}r }
  \hline
  Method/Location & \multicolumn{2}{c}{2002} &\multicolumn{2}{c}{2003} & \multicolumn{2}{c}{2004} & \multicolumn{2}{c}{2005} \\\hline
   \textbf{Tehachapi, CA} & & & & & & & & \\
  Known Wind & 97.5 & & 94.5& & 73.7& & 91.8& \\
  Function with Kernel & 78.8&\, \textit{(80.8\%)} & 77.3&\, \textit{(81.8\%)}& 58.9 &\, \textit{(79.9\%)}& 72.1 &\, \textit{(78.5\%)}\\
  Function with DP & {\bf 85.1}& {\bf \textit{(87.3\%)}} & {\bf 82.6} & {\bf\textit{(87.4\%)}}& {\bf63.9}  & {\bf \textit{(86.7\%)}} & {\bf79.6} & {\bf \textit{(86.7\%)}}\\
  Ignore State & 30.4 & \textit{(31.1\%)} & 31.1 & \textit{(32.9\%)}& 22.8&  \textit{(30.9\%)}& 29.3&  \textit{(31.9\%)}\\\hline
   \textbf{Amarillo, TX} & & & & & & & & \\
  Known Wind & 186.0& & 175.2& & 184.9& & 175.2& \\
  Function with Kernel & 155.1 & \textit{(83.4\%)} & 149.6&  \textit{(85.4\%)}& 154.7& \textit{(83.7\%)} &146.2& \textit{(83.5\%)} \\
  Function with DP & \ {\bf168.2} & {\bf \textit{(90.4\%)}} & \ {\bf160.6} & {\bf \textit{(91.7\%)}}&\ {\bf167.1} & {\bf \textit{(90.4\%)}} & \ {\bf159.4} & {\bf \textit{(91.0\%)}}\\
  Ignore State & 70.3 & \textit{(37.8\%)} & 68.7 & \textit{(39.2\%)} & 69.6 & \textit{(37.6\%)} & 66.1&  \textit{(37.7\%)} \\
  \hline
\end{tabular}
\end{sc}
\end{small}
\caption{Mean values of decisions by method, year and data set.  Percentages of the upper bound, Known Wind, are given for the other methods.}\label{tab:1}
\end{center}
\end{table}

\subsubsection{Wind Results}  Results are presented in Table \ref{tab:1}.  We display the value of each algorithm, along with percentages of Known Wind for the other three methods.  Function-based optimization with both types of weights outperformed the algorithm in which the state variable was ignored by a large margin ($\geq$45\% of the best possible value).  Dirichlet process weights outperformed kernel weights by a smaller but still significant margin (5.6--8.2\% of best possible value).  This is because the DP weights put substantial values on many more observations than kernel weights did in areas with high wind; in effect, kernel weights simply used the one or two closest observations in these areas for prediction.

\section{Discussion}\label{sec:discussion}  
We presented new model-free methods to solve stochastic search problems with an observable state variable, function-based optimization and gradient-based optimization.  We provided conditions for convergence for each.  Both algorithms rely on weighting observations; we gave an easily implementable weighting function (kernels) and a more complex weighting function (Dirichlet process based).  Empirical analysis shows that Dirichlet process weights add value when the state variable distribution is moderate to high dimensional or has super-Gaussian tails; this was the case in the wind commitment problem.  

More generally, this work shows that statistics and machine learning can provide solutions to currently intractable search and optimization problems.  Traditional search and optimization methods are designed to handle problems with large, complex decision spaces.  However, there are many problems where complexity is derived from elements aside from the decision, such as an observable state variable.  Statistics and machine learning offer an array of tools, such as clustering, density estimation, regression and inference, that may prove useful in solving problems that are now currently avoided.
  

\end{document}